\documentclass{amsart}
\usepackage{cancel}
\usepackage{amssymb, amsmath, color}
\usepackage{verbatim}
\usepackage{mathrsfs}
\usepackage{amsmath}
\usepackage{amsfonts}
\usepackage{amssymb}
\usepackage{graphicx}
\usepackage{color}
\usepackage{epstopdf} 
\usepackage{epsfig}
\usepackage{tikz}
\usepackage[dvipsnames]{xcolor}
\usepackage{enumitem}
\usepackage{lineno}
\usepackage[hidelinks]{hyperref}

\DeclareMathOperator{\sdi}{{\rm sd}_{\iota}}

\newtheorem{thm}{Theorem}[section]

\newtheorem{lem}[thm]{Lemma}
\newtheorem{conj}[thm]{Conjecture}
\newtheorem{prop}[thm]{Proposition}
\newtheorem{wn}[thm]{Corollary}
\newtheorem{obs}[thm]{Observation}

\newtheorem{df}[thm]{Definition}
\newtheorem{cons}[thm]{Construction}

\newtheorem{claim}{Claim}[section]

\newcommand{\sta}{{\rm sta}}
\newcommand{\sd}{{\rm sd}}
\newcommand{\Fi}{\mathcal{F}_\iota}

\usepackage{color}

\newcommand{\diam}{\mathrm{diam}}

\usetikzlibrary{backgrounds}

\tikzset{
    v/.style={draw,fill,circle,inner sep=1.5}, 
    e/.style={draw, thick}
}
\tikzset{
    highlight/.style={line width=8pt, gray!50,shorten >=-2pt, shorten <=-2pt}
}

\title{Isolation critical graphs \\ under multiple edge subdivision}

\author{Karl Bartolo} 
\address{Department of Mathematics, University of Malta, Malta}
\email{karl.bartolo.16@um.edu.mt}

\author{Peter Borg}
\address{Department of Mathematics, University of Malta, Malta}
\email{peter.borg@um.edu.mt}

\author{Magda Dettlaff}
\address{Institute of Informatics, University of Gdańsk, Poland}
\email{magda.dettlaff@ug.edu.pl}

\author{Magdalena Lema\'nska}
\address{Institute of Applied Mathematics, Gdańsk University of Technology, Poland}
\email{magleman@pg.edu.pl}

\author{Pawe{\l} \.Zyli\'nski}
\address{Institute of Informatics, University of Gdańsk, Poland}
\email{pawel.zylinski@ug.edu.pl}
\date{}

\renewcommand{\shorttitle}{Isolation critical graphs under multiple edge subdivision}

\begin{document}
\begin{abstract}
This paper introduces the notion of an $(\iota,q)$-critical graph. 
The isolation number of a graph $G$, denoted by $\iota(G)$ and also known as the vertex-edge domination number of $G$, is the size of a smallest subset $D$ of the vertex set of $G$ such that the subgraph induced by the set of vertices that are not in the closed neighbourhood of $D$ has no edges. A graph $G$ is $(\iota,q)$-critical 
if 
every subdivision of $q$ edges of $G$ gives a graph whose isolation number is greater than $\iota(G)$, and $G$ has $q-1$ edges such that subdividing them gives a graph whose isolation number is $\iota(G)$. 
We show that an $(\iota,q)$-critical graph exists for every integer $q \ge 1$. We prove that if $G$ is a connected $m$-edge non-star graph, then $G$ is $(\iota,q)$-critical for some $q \le m - 1$. We show that this bound is best possible. 
We provide a general characterization of $(\iota,1)$-critical graphs as well as a constructive characterization of $(\iota,1)$-critical trees, demonstrating that $(\iota,1)$-criticality can be checked in linear time for trees.
\\[5mm]
{\AmS \; Subject Classification:} 05C05, 05C35, 05C69.
\\[2mm]
\textbf{Keywords}: isolation number, edge subdivision, critical graphs.
\end{abstract}

\maketitle

\section{Introduction}

Let $G =(V(G),E(G))$ be a connected graph of order $n = |V(G)|\geq 1$ and size $m = |E(G)|\geq 0$. For $S\subseteq V(G)$, let $N[S]$ denote the \emph{closed neighbourhood of $S$} (the set of vertices in $S$ and their neighbours). A subset $D$ of $V(G)$ is a \emph{dominating set of $G$} if $V(G)=N[D]$ \cite{HHH23,fund}, and $\gamma(G)$ denotes the size of a smallest dominating set of $G$. The concept of isolation arises by relaxing the condition $V(G)=N[D]$. It was introduced by Caro and Hansberg in~\cite{CaHa17} and then extensively studied in~\cite{Borg1, Borg2, Borg3, Borgrsc, BFK, BFK2, BK23, BG24, BGH24, ChCZ25, FK21, HAV25, LMSS24}, from different perspectives, to mention just a few.
Concretely, let $\mathcal{F}$ be a set of graphs. We say that $D$ is an \emph{$\mathcal{F}$-isolating of $G$} if no subgraph of $G-N[D]$ is a copy of a graph in $\mathcal{F}$. The \emph{$\mathcal{F}$-isolation number of $G$}, denoted by $\iota(G, \mathcal{F})$, is the size of a smallest $\mathcal{F}$-isolating set of $G$. If $\mathcal{F} = \{F\}$, then we may replace $\mathcal{F}$ by $F$ in the defined terms and notation. If $\mathcal{F} = \{K_2\}$, then the terms $\mathcal{F}$-isolating set and $\mathcal{F}$-isolation number, and the notation $\iota(G,\mathcal{F})$, are abbreviated to \emph{isolating set}, \emph{isolation number}, and $\iota(G)$, respectively. An isolating set is also called a \emph{vertex-edge dominating set}~\cite{BChHH15, B25, KMY21, KVK14,  L07, LHHF10, P86,sentUVED, ZZ20, Z19}.  An isolating set of $G$ of size $\iota(G)$ will be called a \emph{minimum isolating set} of $G$ or an \emph{$\iota(G)$-set}. Note that $D$ is a dominating set of $G$ if and only if it is a $K_1$-isolating set of $G$ (that is, $G-N[D]$ has no vertices). Thus, $\gamma(G) = \iota(G,K_1)$. 

For a graph $G$, the \emph{subdivision} of an edge $e = uv$ with a new vertex $w$, called the \emph{subdivision vertex}, is the operation that yields the graph $G_e$ with $V(G_e)=V(G)\cup \{w\}$ and $E(G_e)=(E(G)\setminus\{uv\})\cup \{uw,wv\}$. For $A \subseteq E(G)$, the (subdivided) graph obtained from $G$ by subdividing the edges in $A$ is denoted by $G_A$. The \emph{isolation subdivision number of $G$}, denoted by $\sdi(G)$, is the size of a smallest subset $A$ of $E(G)$ such that $\iota(G_A) > \iota(G)$. If $k \geq 0$ and $G$ is a copy of $K_{1,k}$, then $G$ is called a \emph{$k$-star} or simply a \emph{star}. A graph that is not a star will be called a \emph{non-star graph}. Since the isolation number of a star does not increase when all of its edges are subdivided, we therefore consider only connected non-star graphs. To the best of our knowledge, the only paper concerning the isolation subdivision number is~\cite{DLMZZ26}, where the authors establish that $1 \le \sd_{\iota}(T) \le 4$ for any non-star tree $T$, and provide a complete characterization of the class of trees whose isolation subdivision number is~$1$. Herein, we continue their study and focus on $(\iota,q)$-critical graphs. 

For $q \geq 1$, a graph $G$ is \emph{$(\iota,q)$-critical} if $\iota(G_A) > \iota(G)$ for each $A \subseteq E(G)$ with $|A|=q$, and $\iota(G_{A'}) = \iota(G)$ for some $A' \subseteq E(G)$ with $|A'|=q-1$. Thus, if $G$ is \emph{$(\iota,q)$-critical}, then $q$ is the smallest positive integer such that every subdivision of $q$ edges of $G$ gives a graph whose isolation number is greater than $\iota(G)$. 
To show that $q \leq m-1$ for a connected $m$-edge non-star graph, we prove a stronger statement and establish that this bound is best possible. For this, 
we construct a graph $I_k$ for $k \geq 1$ in Construction~\ref{cons-I}, and a graph $M_k$ for $k \geq 3$ in Construction~\ref{cons-M}. The following is our first main result, proved in Section~\ref{sec:general}.

\begin{thm}\label{thm-subdiv}
    If $G$ is a connected $m$-edge non-star graph, $A \subseteq E(G)$ and $|A| \geq m - 1$, then \[\iota(G_A) \geq \iota(G) + 1.\]
    \\
    \noindent Moreover, for every integer $k \geq 1$, $\iota(I_k) = k$ and $I_k$ is $(\iota, |E(I_k)|-1)$-critical, and if $k \geq 3$, then $|E(M_k)| = k$ and $M_k$ is $(\iota, k-1)$-critical.
\end{thm}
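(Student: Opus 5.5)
The plan has two parts. First I show that subdividing never decreases the isolation number and that a suitable projection loses nothing. Then I show that when at least $m-1$ edges are subdivided the projection can be made to save one vertex.

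\emph{Step 1 (monotonicity).} Let $A\subseteq E(G)$ be arbitrary and let $D'$ be an $\iota(G_A)$-set. Define $D\subseteq V(G)$ by keeping every original vertex of $D'$ and replacing each subdivision vertex $w\in D'$, lying on the edge $uv$, by one of $u,v$ (the choice is free). I claim $D$ is an isolating set of $G$. Take an edge $xy\in E(G)$.
\begin{itemize}
\item If $xy\in A$, then in $G_A$ it becomes the path $x\,w\,y$.
  \begin{itemize}
  \item If $w\in N_{G_A}[D']$, then one of $w,x,y$ lies in $D'$. Whichever it is, its image in $D$ is $x$ or $y$, so $x\in N_G[D]$.
  \item If $w\notin N_{G_A}[D']$, then both $x$ and $y$ must lie in $N_{G_A}[D']$. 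Any $G_A$-neighbour of $x$ in $D'$ is either an original vertex adjacent to $x$, or a subdivision vertex on an edge $xz$ whose image is $x$ or $z$. In every case $x\in N_G[D]$.
  \end{itemize}
\item If $xy\notin A$, the same neighbour argument applies directly.
\end{itemize}
Hence $\iota(G)\le |D|\le \iota(G_A)$, and this holds for every choice of images of the subdivision vertices.

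\emph{Step 2 (saving one vertex when $|A|\ge m-1$).} This is the step I expect to be the main obstacle. I would argue by contradiction: suppose $\iota(G_A)=\iota(G)=k$, so every projection $D$ of every $\iota(G_A)$-set $D'$ is a minimum isolating set of $G$. Since at most one edge $e_0$ of $G$ is not subdivided, $G_A$ is essentially the full subdivision $S(G)$. In $S(G)$ the closed neighbourhood of an original vertex $v$ consists only of $v$ and the subdivision vertices on its incident edges. Such a neighbourhood covers edges of $S(G)$ only at distance one, whereas $N_G[v]$ covers every edge at distance one from $v$ in $G$.

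I would first normalise $D'$ so that it contains as many original vertices as possible: a subdivision vertex whose replacement by an endpoint preserves isolation is replaced. Then I would fix a vertex $x\in D'$ and exploit the freedom in the projection. Because $G$ is connected and not a star, it contains a path $P_4$ or a triangle. I would use this to find two elements of $D'$ whose images can be chosen equal, or an element whose image is redundant in $D$ because its $G$-neighbourhood is covered by the images of the others.

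The case analysis should split according to whether the unsubdivided edge $e_0$ is incident with $x$, and whether $D'$ contains a subdivision vertex. If $D'$ consists only of original vertices, then the $G_A$-edges between subdivision vertices and vertices outside $N[D']$ force strictly more than is needed in $G$. I would extract a redundant vertex by considering a vertex at distance two from $D$ along a $P_4$.

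\emph{Step 3 (constructions).} For $I_k$ and $M_k$ as defined in Constructions~\ref{cons-I} and~\ref{cons-M}:
\begin{itemize}
\item I would compute $\iota$ directly, exhibiting an isolating set of size $k$ and a matching lower bound from $k$ pairwise far-apart edges, since an isolating set must meet the closed neighbourhood of each of them.
\item The first part of the theorem gives $\iota(G_A)>\iota(G)$ for all $|A|=m-1$.
\item Criticality then follows by exhibiting an explicit set $A'$ of $m-2$ edges, for example all edges except two chosen incident to a high-degree centre, together with an isolating set of $G_{A'}$ of size $\iota(G)$.
\item For $M_k$ I would also count edges to confirm $|E(M_k)|=k$.
\end{itemize}
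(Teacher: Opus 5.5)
\textbf{There is a genuine gap in Step~2, which carries the whole first part of the theorem.} Step~1 is correct, but it only reproves the known monotonicity $\iota(G)\le\iota(G_A)$. Step~2 describes a case analysis you intend to carry out; it does not show why an isolating set of $G_A$ must cost an extra vertex.

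Moreover, the mechanism you propose cannot be extracted from $D'$ by local considerations. You suggest finding two elements of $D'$ whose images can be made equal, or a projected vertex that is redundant. Take $G=P_5=(a,b,c,d,e)$ and $A=E(G)$. Then $G_A\simeq P_9$, and its unique $\iota(G_A)$-set is $\{b,d\}$.
\begin{itemize}
\item It contains no subdivision vertex, so its only projection is $\{b,d\}$ itself.
\item Neither vertex can be dropped: $\{b\}$ leaves $de$ uncovered, and symmetrically for $\{d\}$.
\item The saving comes from the unrelated set $\{c\}$, obtained by swapping, not by pruning or collapsing.
\end{itemize}
This is exactly your sub-case ``$D'$ consists only of original vertices''. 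So in general you must build an isolating set of $G$ of size $|D'|-1$ that is not a pruned projection of $D'$. That requires an argument that genuinely uses the minimality of $\iota(G)$. The local fact that $G$ contains a $P_4$ or a triangle only yields $\iota(G_A)\ge 2$, which settles just the case $\iota(G)=1$.

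\textbf{How the paper supplies the missing ingredients.}
\begin{itemize}
\item \emph{Induced-subgraph lemma.} Lemma~\ref{lem-ISL} says that if $|A|\ge m-1$, then $\iota(G_A)\ge\iota(G[X]_A)$ for every $X\subseteq V(G)$. This is false for general $A$. It holds here because every vertex outside $G[X]_A$ has at most one neighbour in it, since at most one edge is unsubdivided.
\item \emph{Case $\iota(G)=1$.} The lemma disposes of this case via a triangle or, if $G$ is triangle-free, an induced $P_4$ or $C_4$ through an isolating vertex. Their subdivisions ($C_5,C_6,P_6,P_7,C_7,C_8$) all have isolation number $2$.
\item \emph{Case $\iota(G)\ge 2$.} The paper inducts on $n$. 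Let $v$ be an endpoint of degree at least $2$ of the unsubdivided edge, and put $G'=G-N_G[v]$. Then $\iota(G_A)\ge\iota(G'_A)+1\ge\iota(G')+1$. Any strict inequality finishes the proof via Lemma~\ref{lem-isol1}. Otherwise the induction hypothesis forces every component of $G'$ to be a star. The $\iota(G_A)$-set is then normalised to contain no subdivision vertices between $N_G(v)$ and $G'$, and a short case analysis concludes.
\end{itemize}
Some global reduction of this kind is what your Step~2 lacks.

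\textbf{A smaller slip in Step~3.} Your sample set $A'$ does not work. Leaving two edges at the centre unsubdivided gives isolation number $4$ for $I_3$, and $2$ for $M_k$ with $k\ge4$. The two edges left unsubdivided must be those of the short leg: $v_1^1v_2^1, v_2^1v_3^1$ in $I_k$, and $w_0w_1, w_1w_2$ in $M_k$. Also note that $\iota(M_k)=1$, not $k$.
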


\noindent Theorem~\ref{thm-subdiv} has the following immediate consequence.

\begin{wn}\label{cor-upperq}
     If $G$ is a connected $m$-edge non-star graph, then $G$ is $(\iota,q)$-critical for some $q \leq m - 1$. 
\end{wn}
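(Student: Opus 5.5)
The plan is to derive Corollary~\ref{cor-upperq} from the first part of Theorem~\ref{thm-subdiv}, together with one general monotonicity fact about subdivision. Let $G$ be a connected $m$-edge non-star graph. Define the set
\[
Q=\{q\ge 1:\ \iota(G_A)>\iota(G)\ \text{for every } A\subseteq E(G) \text{ with } |A|=q\}.
\]
If $m-1\ge 1$, then Theorem~\ref{thm-subdiv} applied to sets $A$ with $|A|=m-1$ shows that $m-1\in Q$. Note that $m\ge 3$ here, since a connected graph with at most two edges is a star, so $m-1\ge 2$. Hence $Q\neq\emptyset$, and we let $q_0=\min Q$, so that $q_0\le m-1$.

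The next step is to check that $G$ is $(\iota,q_0)$-critical. The first condition, that every set of $q_0$ edges gives a larger isolation number, holds by the definition of $Q$. For the second condition, we need some $A'$ with $|A'|=q_0-1$ and $\iota(G_{A'})=\iota(G)$.

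If $q_0=1$, take $A'=\emptyset$; then $G_{A'}=G$ and there is nothing to prove.

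If $q_0\ge 2$, minimality gives $q_0-1\notin Q$. So there is some $A'$ with $|A'|=q_0-1$ and $\iota(G_{A'})\le\iota(G)$. To upgrade this to equality we need the monotonicity lemma $\iota(G_A)\ge\iota(G)$ for every $A$, which reduces by induction on $|A|$ to a single subdivision: $\iota(G_e)\ge \iota(G)$ for every edge $e$.

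The main obstacle is this single-subdivision lemma. Let $D$ be an isolating set of $G_e$, where $e=uv$ is subdivided by $w$. We build an isolating set $D'$ of $G$ with $|D'|\le|D|$ as follows.
\begin{itemize}
\item If $w\notin D$, set $D'=D$. Every edge of $G$ other than $uv$ is an edge of $G_e$, so it meets $N_{G_e}[D]\cap V(G)\subseteq N_G[D]$. The edge $uv$ survives only if neither $u$ nor $v$ lies in $N_G[D]$. But $wu$ must be covered in $G_e$, and $w\notin N_{G_e}[D]$ (otherwise $u$ or $v$ would be in $D$), so $u\in N_{G_e}[D]$, giving $u\in N_G[D]$.
\item If $w\in D$, replace $w$ by $u$. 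Then $N_G[u]$ contains $u$ and $v$, which were the only vertices of $G$ covered by $w$.
\end{itemize}
In both cases $D'$ isolates $G$, which proves the lemma.

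Combining the lemma with $\iota(G_{A'})\le\iota(G)$ gives $\iota(G_{A'})=\iota(G)$, so $G$ is $(\iota,q_0)$-critical with $q_0\le m-1$.
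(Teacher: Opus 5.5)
Your argument is correct and is essentially the paper's: the corollary is the ``immediate consequence'' of the first part of Theorem~\ref{thm-subdiv}. You take the least $q\ge 1$ for which every $q$-edge subdivision raises $\iota$, and use the lower bound $\iota(G)\le\iota(G_A)$ of Corollary~\ref{cor-subdiv} to upgrade $\iota(G_{A'})\le\iota(G)$ to equality at size $q-1$. The only difference is that you reprove the single-edge inequality $\iota(G_e)\ge\iota(G)$ (correctly, via the swap of $w$ for $u$) instead of citing it from the literature as the paper does.
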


In Section~\ref{sec:1crit}, we provide a general characterization of $(\iota,1)$-critical graphs as well as a constructive characterization of $(\iota,1)$-critical trees, demonstrating that $(\iota,1)$-criticality can be checked in linear time for a tree.

\medskip
\paragraph{\textbf{Background.}} The concept of critical graphs in domination has a long history with numerous operations that alter a graph in small ways (for standard domination, see for example~\cite{AP07,BHHR05, BKK15, F96, S91,SB83,WA79, ZSZ25}).
The idea of $q$-criticality for the domination number with respect to subdivision of edges was introduced recently by Dettlaff et al.~in \cite{magda} (for additional results related to the effect of edge subdivision on the domination number of a graph, see for example~\cite{fav1,bhv,fav3,fav2}). Recall that a graph $G$ is \emph{$(\gamma,q)$-critical} if $\gamma(G_A)>\gamma(G)$ for each $A\subseteq E(G)$ with $|A|=q$, and $\gamma(G_{A'})=\gamma(G)$ for some $A'\subseteq E(G)$ with $|A'|=q-1$. The paper \cite{magda}  discusses some general properties of $(\gamma,q)$-critical graphs and characterizes the $(\gamma,|V(T)|-1)$-critical trees $T$ as well as some classes of $(\gamma,k)$-critical trees for $k \in \{2,3\}$.

\medskip
\paragraph{\textbf{Notation.}}
Let $[k]$ denote $\{1,\ldots ,k\}$. For a set $S \subseteq V(G)$, let $N_G(S)$ denote $N_G[S] \setminus S$.
An $n$-vertex path $P$ with vertex set $V(P) = \{v_1,\ldots,v_n\}$ and edge set $E(P) = \{v_1 v_2, v_2 v_3, \ldots, v_{n-1}v_n\}$ can be represented as $P = (v_1,\ldots,v_n)$. Such a path is called a $v_1$-$v_n$ path. A copy of $C_3$ is called a \emph{triangle}. 
A vertex $v \in V(G)$ is a \emph{leaf} of $G$ if $v$ has exactly one neighbour in $G$;
a vertex $u \in V(G)$ is called a \emph{support vertex} of $G$ if it is adjacent to a leaf of $G$.
For $v \in V(G)$, the \emph{degree of $v$}, denoted by $d_G(v)$, is $|N_G(v)|$.
The \emph{distance} between two vertices $x$ and $y$ of $G$, denoted by $d_G(x,y)$, is the length of the shortest $x$-$y$ path in $G$.
The \emph{diameter of $G$}, denoted by $\diam(G)$, is $\max\{d_G(u,v) \colon u,v \in V(G)\}$. If a subgraph of $G$ is a path of length $\diam(G)$, then it is called a \emph{diametral path of $G$}. 
The distance between a vertex $x$ and a set of vertices $Y$ in $G$, denoted by $d_G(x,Y)$, is $\min\{d_G(x,y) \colon y \in Y\}$. Similarly, the distance between two vertex sets $X$ and $Y$ in $G$, denoted by $d_G(X,Y)$, is $\min\{d_G(x,y) \colon x \in X, y\in Y\}$.
A subset $A$ of $V(G)$ is a $k$-\emph{packing of $G$} if $d_G(x,y)>k$ for every $x,y\in A$ with $x \neq y$.
If $G$ is a $k$-star and $k \geq 1$, then $G$ is called a \emph{non-trivial star}. 
If $G$ is a star and $N_G[v] = V(G)$, then $v$ is called the \emph{central vertex of $G$}. 
If $G$ has an $\iota(G)$-set $\{v\}$ for some $v \in V(G)$, then $v$ is called an \emph{isolating vertex of $G$}.

\section{Preliminary results}\label{sec:prelim}

The isolation number of a graph cannot decrease with the subdivision of an edge and can increase by at most one.

\medskip
\begin{prop}~\cite{DLMZZ26}
    If $G$ is a graph, then $\iota(G)\leq \iota(G_e)\leq \iota(G)+1$ for each $e \in E(G)$.
\end{prop}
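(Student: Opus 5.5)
We prove the two inequalities separately. Fix an edge $e=uv$ of $G$, and let $w$ be the subdivision vertex, so $G_e$ has the edges $uw$ and $wv$ in place of $uv$.

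\emph{Upper bound.} Let $D$ be an $\iota(G)$-set and put $D'=D\cup\{w\}$, so $|D'|\le\iota(G)+1$. In $G_e$ we have $N_{G_e}[w]=\{u,w,v\}$. Hence
\[
V(G_e)\setminus N_{G_e}[D'] \subseteq V(G)\setminus \bigl(N_G[D]\cup\{u,v\}\bigr).
\]
Subdividing $uv$ changes neighbourhoods only at $u$ and $v$, and both are now dominated by $w$. So for $x\in V(G)\setminus\{u,v\}$, adjacency and domination by $D$ are the same in $G$ and $G_e$. Any edge of $G_e-N_{G_e}[D']$ therefore has both ends in $V(G)\setminus\{u,v\}$. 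Such an edge is an edge of $G$ avoiding $N_G[D]$, which is impossible. Thus $\iota(G_e)\le\iota(G)+1$.

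\emph{Lower bound.} Let $D'$ be an $\iota(G_e)$-set. We build an isolating set $D$ of $G$ with $|D|\le|D'|$, splitting on whether $w\in D'$.

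\emph{Case $w\in D'$.} Set $D=(D'\setminus\{w\})\cup\{u\}$; if $u\in D'$ already, we have only removed $w$. Then $N_G[D]\supseteq (N_{G_e}[D']\setminus\{w\})\cap V(G)$, because $w$ dominated only $u$ and $v$, and $u$ dominates both in $G$. The other vertices of $D'$ keep their neighbourhoods, except that $u$ or $v$ loses $w$ and gains each other. Any edge of $G-N_G[D]$ other than $uv$ is an edge of $G_e$ avoiding $N_{G_e}[D']$, which cannot exist. The edge $uv$ itself is covered since $u\in D$.

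\emph{Case $w\notin D'$.} Set $D=D'\subseteq V(G)$. If $x\in D'$ with $x\ne u,v$, its neighbourhood is unchanged. If $u\in D'$, then in $G$ the vertex $u$ dominates $v$ as well, so $N_G[D]\supseteq N_{G_e}[D']\setminus\{w\}$. Suppose an edge $xy$ of $G$ survives in $G-N_G[D]$. If $xy\ne uv$, it is an edge of $G_e$ with the same ends and is not dominated there, a contradiction. If $xy=uv$, then $u,v\notin N_G[D]$, so neither $u$ nor $v$ is dominated in $G_e$. Since $w\notin D'$ and $w$'s only neighbours are $u,v\notin D'$, the vertex $w$ is not dominated either. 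Then $uw$ survives in $G_e$, again a contradiction.

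In both cases $\iota(G)\le|D|\le|D'|=\iota(G_e)$. The only delicate point is the bookkeeping of which vertices $u$, $v$, $w$ are dominated, particularly the case where the surviving edge is $uv$ itself. The argument remains valid when $u$ or $v$ is a leaf, so no special cases are needed.
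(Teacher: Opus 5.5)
Your proof is correct. The inclusion $V(G_e)\setminus N_{G_e}[D\cup\{w\}]\subseteq V(G)\setminus(N_G[D]\cup\{u,v\})$ gives the upper bound. In the lower bound, the key fact is $N_G[D]\supseteq N_{G_e}[D']\setminus\{w\}$; in the case $w\notin D'$ this holds whether or not $u\in D'$, so your ``If $u\in D'$'' is bookkeeping, not a restriction. That fact reduces everything to the single edge $uv$, which you handle correctly in both cases.

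The paper gives no proof of its own here, since the statement is quoted from \cite{DLMZZ26}. Your replacement of $w$ by an end of $e$ when $w\in D'$ is the same device the paper uses in the converse part of Theorem~\ref{thm-ABC}, where it forms $D_x=(D'\setminus\{z\})\cup\{x\}$ from an $\iota(G_{xy})$-set.
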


\noindent We immediately obtain the following corollary.
\begin{wn}\label{cor-subdiv}
    If $G$ is a graph, then $\iota(G) \leq \iota(G_A) \leq \iota(G) + |A|$ for each $A \subseteq E(G)$.
\end{wn}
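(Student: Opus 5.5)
The plan is an induction on $|A|$ that invokes the preceding Proposition once per subdivided edge. The lower bound is immediate from that Proposition. For the upper bound, order the edges of $A$ as $e_1,\dots,e_k$ with $k=|A|$, and set $A_0=\emptyset$ and $A_i=\{e_1,\dots,e_i\}$.

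Each $G_{A_i}$ is obtained from $G_{A_{i-1}}$ by subdividing a single edge. The key point is that the edge $e_i$ of $G$ is still an edge of $G_{A_{i-1}}$. This holds because subdividing $e_1,\dots,e_{i-1}$ only removes those edges and adds new edges incident to subdivision vertices, so $e_i\notin A_{i-1}$ survives unchanged. Thus $G_{A_i}=(G_{A_{i-1}})_{e_i}$, and subdividing the edges of $A$ in any order yields the same graph $G_A$.

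Applying the Proposition to the graph $G_{A_{i-1}}$ and the edge $e_i$ gives
\[
\iota(G_{A_{i-1}})\le \iota(G_{A_i})\le \iota(G_{A_{i-1}})+1 \quad \text{for each } i\in[k].
\]
Chaining these inequalities for $i=1,\dots,k$ yields
\[
\iota(G)=\iota(G_{A_0})\le \iota(G_{A_k})=\iota(G_A)\le \iota(G)+k=\iota(G)+|A|.
\]
The case $A=\emptyset$ is trivial.

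The only step needing any care is the bookkeeping that $e_i$ remains an edge of the intermediate graph and that the result does not depend on the order. Beyond that there is no real obstacle.
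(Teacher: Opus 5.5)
Your proposal is correct and takes the same approach as the paper: the paper obtains this corollary "immediately" by iterating the preceding Proposition one edge at a time. Your induction on $|A|$ writes out exactly that iteration, including the check that each $e_i$ is still an edge of the intermediate graph $G_{A_{i-1}}$.
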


\noindent Since $\iota(G) \le n/3$ for any connected $n$-vertex graph $G$~\cite{CaHa17,Z19}, we observe that the bound $\iota(G_A) \le \iota(G)+|A|$ is tight up to $|A|=(n-3)/2$ for some graph $G$. Indeed, consider a \emph{spider} $S_t$, that is, the graph obtained from the star $K_{1,t}$ for $t \geq 2$ by subdividing each edge of the star (thus, $|V(S_t)|=2t+1$). If $A$ is a set of $t-1$ edges incident to a leaf in $S_t$, then $|A|=t-1=(|V(S_t)|-3)/2$ and \[\iota((S_t)_A)=\frac{|V({(S_t)}_A)|}{3}=t=1+\frac{|V(S_t)|-3}{2}=\iota(S_t)+|A|\] as required.

In the literature, when introducing new domination parameters, most preliminary results concern establishing tight bounds for well-known graph classes. This is not our intention. However, we need to recall two basic results that will be exploited in the proofs of the main results. Recall that from~\cite{CaHa17} we have $\iota(P_n)=\lceil (n-1)/4 \rceil$ for a path $P_n$, $n \ge 1$, while $\iota(C_n)=\lceil n/4 \rceil$ for a cycle $C_n$, $n \ge 3$. On the other hand, Dettlaff et al.~\cite{DLMZZ26} proved the following observations.

\begin{obs}\cite{DLMZZ26} 
    If $G$ is a path of order $n \ge 4$, then
    \begin{equation*}
        \sdi(G)=\begin{cases}
            1, \quad n\equiv 1\pmod 4\\
            2, \quad n\equiv 0\pmod 4\\
            3, \quad n\equiv 3\pmod 4\\
            4, \quad n\equiv 2\pmod 4.
        \end{cases}
    \end{equation*}
\end{obs}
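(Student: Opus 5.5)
The plan is to compute $\iota(G_A)$ for every $A \subseteq E(P_n)$ using the formulas $\iota(P_k)=\lceil (k-1)/4\rceil$ and $\iota(C_k)=\lceil k/4\rceil$ recalled above. The key observation is that for a path only the number of subdivided edges matters. If $P_n$ has $n\ge 4$ vertices and $A\subseteq E(P_n)$ with $|A|=j$, then $(P_n)_A$ is isomorphic to $P_{n+j}$, because subdividing $j$ edges of a path, in any positions, just yields a longer path. Hence
\[
\sdi(P_n)=\min\left\{\, j\ge 1 : \left\lceil \tfrac{n+j-1}{4}\right\rceil > \left\lceil \tfrac{n-1}{4}\right\rceil \,\right\},
\]
provided this $j$ is at most $|E(P_n)|=n-1$. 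Since $n\ge 4$ and the minimal $j$ will turn out to be at most $4$, the constraint $j\le n-1$ holds automatically. This is the only place where the hypothesis $n\ge 4$ enters: it guarantees that enough edges exist to subdivide.

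Next I compute the minimal $j$. Write $n-1=4t+r$ with $r\in\{0,1,2,3\}$. Then $\lceil (n-1+j)/4\rceil$ first exceeds $\lceil (n-1)/4\rceil$ exactly when $n-1+j$ passes the next multiple of $4$ strictly beyond the value $4\lceil (n-1)/4\rceil$.

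\begin{itemize}
\item If $r=0$, so $n\equiv 1 \pmod 4$, then $n-1$ is itself a multiple of $4$, and $j=1$ already increases the ceiling.
\item If $r\ge 1$, then $\lceil (n-1)/4\rceil=t+1$, and the value increases once $n-1+j\ge 4(t+1)+1$, that is, $j\ge 5-r$. This gives $j=4,3,2$ for $r=1,2,3$, that is, for $n\equiv 2,3,0 \pmod 4$ respectively.
\end{itemize}

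These values match the stated table. Throughout, the fact that $\iota$ is nondecreasing under subdivision is already implied by the explicit formula, so no appeal to Corollary~\ref{cor-subdiv} is needed beyond consistency.

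The only real obstacle is justifying the isomorphism $(P_n)_A\cong P_{n+|A|}$, which I would state as a one-line remark: each subdivision replaces one edge of a path by a path of length $2$, so the result is again a path with one more vertex. With that in hand, the rest is modular arithmetic. I would also double-check the edge case $n=4$, where $j=2\le 3$ edges are available.
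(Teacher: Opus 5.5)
Your proposal is correct and is essentially the route the paper relies on: the paper quotes this observation from \cite{DLMZZ26} without proof, but the facts it recalls alongside it, $\iota(P_n)=\lceil (n-1)/4\rceil$ and $(P_n)_A\cong P_{n+|A|}$, reduce it to your residue computation, plus the check (which you make, including $n=4$) that the required number $j\in\{1,2,3,4\}$ of subdivided edges never exceeds $n-1$. One wording slip: the ceiling first increases as soon as $n-1+j$ exceeds $4\lceil (n-1)/4\rceil$ itself, not the next multiple of $4$ beyond it, and that is the threshold your bullet computations actually use.
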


\begin{obs}\cite{DLMZZ26} 
    If $G$ is a cycle of order $n \ge 3$, then
    \begin{equation*}
        \sdi(G)=\begin{cases}
            1, \quad n\equiv 0\pmod 4\\
            2, \quad n\equiv 3\pmod 4\\
            3, \quad n\equiv 2\pmod 4\\
            4, \quad n\equiv 1\pmod 4.
        \end{cases}
    \end{equation*}
\end{obs}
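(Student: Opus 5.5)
The plan is to reduce everything to the known formula $\iota(C_n)=\lceil n/4\rceil$ from~\cite{CaHa17}. The key observation is that subdividing edges of a cycle yields another cycle. Concretely, if $G=C_n$ and $A\subseteq E(G)$ with $|A|=k$, then $G_A$ is a copy of $C_{n+k}$, whichever $k$ edges are chosen. Hence $\iota(G_A)=\lceil (n+k)/4\rceil$ depends only on $k=|A|$. Consequently, $\sdi(C_n)$ is the least integer $k$ with $1\le k\le n$ and
\[
\left\lceil \frac{n+k}{4}\right\rceil>\left\lceil \frac{n}{4}\right\rceil .
\]

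Next, I would write $n=4t+r$ with $r\in\{0,1,2,3\}$ and locate the first increase of the ceiling. Since $\lceil (n+k)/4\rceil$ is nondecreasing in $k$, the first increase occurs at the smallest $k\ge1$ for which $n+k\equiv 1 \pmod 4$. The four cases are as follows.
\begin{itemize}
\item If $r=0$, then $\lceil n/4\rceil=t$ and $\lceil (n+1)/4\rceil=t+1$, so $k=1$.
\item If $r=3$, then $\lceil n/4\rceil=t+1$ and $n+1=4(t+1)$ gives the same value $t+1$, while $n+2$ gives $t+2$, so $k=2$.
\item If $r=2$, then $n+1$ and $n+2$ give $t+1$, while $n+3$ gives $t+2$, so $k=3$.
\item If $r=1$, then $\lceil n/4\rceil=t+1$ and $n+1,n+2,n+3$ give $t+1$, while $n+4$ gives $t+2$, so $k=4$.
\end{itemize}
This matches the claimed table.

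Finally, I would check that the required $k$ does not exceed the number of edges $n$, so that a set $A$ of that size actually exists. For $r=0$ we have $n\ge4$, and for $r=3$ we have $n\ge3\ge2$. For $r=2$ we have $n\ge 6\ge3$, and for $r=1$ we have $n\ge5\ge4$, using $n\ge3$. There is no real obstacle here. The only point needing care is justifying that $G_A\cong C_{n+k}$ regardless of which edges form $A$; this holds because each subdivision replaces one cycle edge by a path of length two, so the result is still a single cycle.
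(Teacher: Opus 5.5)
Your proof is correct: reducing to $\iota(C_{n+k})=\lceil (n+k)/4\rceil$ via $G_A\cong C_{n+|A|}$, locating the first $k\ge 1$ with $n+k\equiv 1 \pmod 4$, and checking that this $k$ never exceeds $n$ gives exactly the table. The paper does not reprove this observation; it quotes it from \cite{DLMZZ26}. The two facts it invokes right next to it, $\iota(C_n)=\lceil n/4\rceil$ from \cite{CaHa17} and $(C_n)_A\cong C_{n+|A|}$, are exactly the ingredients of your argument, so your route is essentially the intended one.
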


Consequently, since the subdivision of any $k$ edges of the cycle $C_n$ (the path $P_n$) {leads to a graph} isomorphic to $C_{n+k}$ ($P_{n+k}$), we immediately obtain the following observations.

\begin{obs}\label{obs-paths}
    If $n \geq 4$, then $P_n$ is $(\iota,q)$-critical with $q=\sdi(P_n)$.
\end{obs}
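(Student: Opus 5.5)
The plan is to reduce everything to the fact that subdividing any $k$ edges of $P_n$ yields a graph isomorphic to $P_{n+k}$. The isolation number of a subdivided path therefore depends only on how many edges are subdivided, not on which ones. Then $(\iota,q)$-criticality with $q=\sdi(P_n)$ follows from the definition of $\sdi$ together with the formula $\iota(P_n)=\lceil (n-1)/4\rceil$.

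\textbf{Step 1: every $q$-subset of edges increases $\iota$.} Let $q=\sdi(P_n)$, which is well defined and at most $4$ by the preceding observation for paths. By definition of $\sdi$, some $A_0\subseteq E(P_n)$ with $|A_0|=q$ satisfies $\iota((P_n)_{A_0})>\iota(P_n)$. Let $A\subseteq E(P_n)$ be any set with $|A|=q$; note $q\le 4\le n-1=|E(P_n)|$ since $n\ge 5$ whenever such sets are needed (for $n=4$, $q=2\le 3$). Then
\[(P_n)_A\cong P_{n+q}\cong (P_n)_{A_0}.\]
Hence $\iota((P_n)_A)=\iota((P_n)_{A_0})>\iota(P_n)$.

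\textbf{Step 2: some $(q-1)$-subset leaves $\iota$ unchanged.} Take any $A'\subseteq E(P_n)$ with $|A'|=q-1$; for $q=1$ this is $A'=\emptyset$. By minimality of $q$ in the definition of $\sdi$, we get $\iota((P_n)_{A'})\le\iota(P_n)$. Corollary~\ref{cor-subdiv} gives the reverse inequality, so $\iota((P_n)_{A'})=\iota(P_n)$.

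Steps 1 and 2 are exactly the two conditions in the definition of $(\iota,q)$-criticality, which proves the observation. As a sanity check, one can verify directly from $\iota(P_n)=\lceil (n-1)/4\rceil$ that the smallest $k\ge 1$ with $\lceil (n+k-1)/4\rceil>\lceil (n-1)/4\rceil$ matches the tabulated values.

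There is no real obstacle; the only point needing care is that the isomorphism type of $(P_n)_A$ depends only on $|A|$. This holds because subdividing an edge of a path lengthens it by one while preserving linearity.
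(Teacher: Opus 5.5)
Your argument is correct and is essentially the paper's. The paper derives the observation directly from the fact that subdividing any $k$ edges of $P_n$ gives a copy of $P_{n+k}$, so $\iota((P_n)_A)$ depends only on $|A|$, and then reads off criticality from the definition of $\sdi(P_n)$; your Step~2 (minimality of $\sdi(P_n)$ together with Corollary~\ref{cor-subdiv}) just spells out the half the paper leaves implicit, and your aside about $q\le n-1$ is unnecessary because the existence of $A_0$ already guarantees it.
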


\begin{obs}\label{obs-cycles}
    If $n \geq 3$, then $C_n$ is $(\iota,q)$-critical with $q=\sdi(C_n)$.
\end{obs}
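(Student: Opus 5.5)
The plan is to use the fact that subdividing edges of a cycle always produces another cycle, so $\iota((C_n)_A)$ depends only on $|A|$. The definition of $(\iota,q)$-criticality then reduces to the definition of $\sdi(C_n)$.

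First, for any $A \subseteq E(C_n)$ we have $(C_n)_A \cong C_{n+|A|}$. By the formula $\iota(C_n)=\lceil n/4\rceil$ from~\cite{CaHa17}, this gives $\iota((C_n)_A) = \lceil (n+|A|)/4 \rceil$. This value depends only on $k=|A|$ and is nondecreasing in $k$.

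Next, let $q=\sdi(C_n)$. We must check that $q \le |E(C_n)| = n$, so that sets of size $q$ exist. The observation preceding Observation~\ref{obs-cycles} gives $q\le 3$ whenever $n\not\equiv 1 \pmod 4$, and $q=4$ only when $n\equiv 1\pmod 4$. In the latter case $n\ge 5$, so $q\le n$ holds in all cases.

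By definition of $\sdi$, some $A_0$ with $|A_0|=q$ satisfies $\iota((C_n)_{A_0})>\iota(C_n)$. Since $\iota((C_n)_A)$ depends only on $|A|$, every $A$ with $|A|=q$ satisfies $\iota((C_n)_A)>\iota(C_n)$.

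For the second condition, take any $A'$ with $|A'|=q-1$; if $q=1$, then $A'=\emptyset$. By minimality of $q$ in the definition of $\sdi$, we have $\iota((C_n)_{A'})\le \iota(C_n)$. Corollary~\ref{cor-subdiv} gives the reverse inequality, so equality holds.

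Both conditions of $(\iota,q)$-criticality are therefore satisfied. There is no real obstacle here. The only points needing care are the isomorphism $(C_n)_A\cong C_{n+|A|}$, which is what makes the property hold for every $q$-set rather than just one, and the check that $q\le n$.
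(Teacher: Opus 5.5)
Your proof is correct and follows the paper's approach: the paper derives this observation directly from the fact that subdividing any $k$ edges of $C_n$ gives a copy of $C_{n+k}$, so $\iota((C_n)_A)$ depends only on $|A|$ and $(\iota,q)$-criticality reduces to the definition of $\sdi(C_n)$. Your explicit checks that $q \le n$, and that every $(q-1)$-subset gives equality via minimality together with Corollary~\ref{cor-subdiv}, fill in details the paper leaves implicit.
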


\section{Graphs which are \texorpdfstring{$(\iota,q)$}{(ι,q)}-critical}\label{sec:general}

We consider two different perspectives on tackling the existence of $(\iota,q)$-critical graphs, the first one stated as follows. Given an integer $q \geq 1$, can we find a connected graph which is $(\iota,q)$-critical? To answer this question, we require the following definition.

A \emph{d-wounded spider} $S_{t,t-d}$ is the graph formed by subdividing $t-d\leq t-1$ edges of a star $K_{1,t}, t \geq 2$  ($d$ is a number of edges that we do not subdivide; $d \geq 1$).

\begin{prop}\label{prop-woundedspider}
    If $T$ is a $d$-wounded spider $S_{t,t-d}$, then $T$ is $(\iota,d+1)$-critical.
\end{prop}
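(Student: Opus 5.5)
Let $c$ be the centre of $T=S_{t,t-d}$, with subdivided legs $(c,x_i,y_i)$ for $i\in[t-d]$ and pendant edges $c\ell_j$ for $j\in[d]$. Since $d\le t-1$, there is at least one subdivided leg. View $T$, and every graph $T_A$ with $A\subseteq E(T)$, as $t$ internally disjoint paths (\emph{legs}) that share only the endpoint $c$. The length of a leg is its number of edges. In $T$, there are $d$ legs of length $1$ and $t-d$ legs of length $2$, and subdividing a set $A$ of edges increases the total leg length by $|A|$. The plan has three steps: compute $\iota(T)$, exhibit a set of $d$ edges whose subdivision does not raise the isolation number, and show that any $d+1$ subdivisions do raise it.

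First, $\iota(T)=1$. The set $\{c\}$ is isolating, because $T-N[c]$ consists only of the isolated vertices $y_i$. Also $\iota(T)\ge 1$, since $T$ has an edge.

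Second, let $A'=\{c\ell_j : j\in[d]\}$. Then $T_{A'}$ is the spider $S_t$, in which every leg has length $2$, so $\{c\}$ is still isolating and $\iota(T_{A'})=1=\iota(T)$.

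Third, take any $A\subseteq E(T)$ with $|A|=d+1$. The total leg length of $T_A$ is
\[
d+2(t-d)+(d+1)=2t+1>2t.
\]
By Corollary~\ref{cor-subdiv} it suffices to show that no single vertex $v$ isolates $T_A$, which gives $\iota(T_A)\ge 2$. I would split into cases according to the position of $v$.
\begin{enumerate}[label=(\roman*)]
\item If $v=c$: since the total length exceeds $2t$, some leg has length at least $3$, and its third edge lies in $T_A-N[c]$.
\item If $v$ lies on a leg $L$ and is not adjacent to $c$: since $t\ge 2$, there is another leg, and its edge at $c$ avoids $N[v]$.
\item If $v$ lies on a leg $L$ and is adjacent to $c$: every other leg must have length $1$, for otherwise its second edge is uncovered. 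Then $L$ has length $2t+1-(t-1)=t+2\ge 4$, so $L$ contains an edge at distance at least $2$ from $v$, which is not covered.
\end{enumerate}
Hence $\iota(T_A)>\iota(T)$ for every $A$ with $|A|=d+1$. Combined with the second step, this shows that $T$ is $(\iota,d+1)$-critical.

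I expect the only delicate point to be the exhaustive case analysis in the third step, in particular case (iii), where $v$ is adjacent to $c$. The counting argument (total length $2t+1$) handles it cleanly.
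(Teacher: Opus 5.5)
Your proof is correct and follows the same plan as the paper: $\{c\}$ gives $\iota(T)=1$, subdividing the $d$ pendant edges yields the spider $S_t$ with isolating vertex $c$, and any $d+1$ subdivisions force $\iota(T_A)\ge 2$. The only difference is in this last step: the paper notes that $T_A$ has diameter at least $5$ and so has no isolating vertex (it writes $=5$, which is not always exact, e.g.\ $t=3$, $d=1$ with both edges of one length-$2$ leg subdivided gives diameter $6$), whereas your leg-length count $2t+1$ and case analysis on the candidate vertex $v$ verify the same fact directly.
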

\proof{
    Let $S$ be a copy of $K_{1,t}$, let $x$ be the central vertex of $S$, and let $v_1,\ldots,v_t \in V(S)\setminus\{x\}$. To obtain a copy $T$ of the $d$-wounded spider $S_{t,t-d}$, simply subdivide $xv_i$ with subdivision vertex $u_i$ for every $i \in [t]\setminus[d]$. Note that $\iota(T)=1$.

    Consider now the set $A=\{xv_i \colon i \in [d]\}$. The set $\{x\}$ is an isolating set of $T$ and $T_A$. Therefore, $q\geq d$. Let $A'$ be a set of $d+1$ edges of $T$. Note that $A'$ contains at least one edge in $E(T)\setminus A$. Thus, $\diam(T_{A'}) = 5$. Hence, $\iota(T_{A'}) > 1=\iota(T)$. Therefore, $T$ is $(\iota,d+1)$-critical.\qed{}
}
\\

By Observation~\ref{obs-paths} and Proposition~\ref{prop-woundedspider}, we obtain the following corollary.

\begin{wn}\label{cor-anyq}
    There exists an $(\iota,q)$-critical tree for every $q \ge 1$.
\end{wn}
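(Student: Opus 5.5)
The statement is Corollary~\ref{cor-anyq}: for every $q\ge 1$ we need a tree that is $(\iota,q)$-critical. The plan is to let Proposition~\ref{prop-woundedspider} handle all $q\ge 2$ and to use Observation~\ref{obs-paths} only for $q=1$.

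For $q\ge 2$, set $d=q-1\ge 1$ and take any $t\ge d+1$, for instance $t=d+1$. The $d$-wounded spider $S_{t,t-d}$ is then well defined, since the condition $t-d\le t-1$ reduces to $d\ge 1$. It is a tree, because it is obtained from the star $K_{1,t}$ by subdividing edges. By Proposition~\ref{prop-woundedspider} it is $(\iota,d+1)$-critical, that is, $(\iota,q)$-critical.

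For $q=1$, the wounded spiders give only $q\ge 2$, so we turn to paths. Take $n\equiv 1 \pmod 4$ with $n\ge 4$, say $n=5$. By Observation~\ref{obs-paths}, $P_5$ is $(\iota,q)$-critical with $q=\sdi(P_5)$. The observation of~\cite{DLMZZ26} on $\sdi$ of paths gives $\sdi(P_5)=1$.

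As a sanity check, $\iota(P_5)=\lceil 4/4\rceil=1$ and $\iota(P_6)=\lceil 5/4\rceil=2$. Subdividing any single edge of $P_5$ gives $P_6$, so the isolation number increases. With $A'=\emptyset$ the equality $\iota(G_{A'})=\iota(G)$ holds trivially, so $P_5$ is indeed $(\iota,1)$-critical.

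Paths also cover $q\in\{2,3,4\}$ directly, but the wounded spiders already handle every $q\ge 2$. There is no real obstacle here; the only care needed is the bookkeeping that $d=q-1\ge 1$ is admissible in the definition of a $d$-wounded spider.
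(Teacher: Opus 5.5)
Your proposal is correct and matches the paper's argument, which obtains the corollary by combining Proposition~\ref{prop-woundedspider} and Observation~\ref{obs-paths}. As you do, the wounded spiders give every $q=d+1\ge 2$, and a path with $n\equiv 1 \pmod 4$ such as $P_5$ supplies $q=1$.
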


\noindent Now, we consider a different perspective. That is, given a graph $G$ which is $(\iota,q)$-critical, what are the possible values of the parameter $q$? Recall that by Corollary~\ref{cor-anyq}, we ascertain that there exist $(\iota,q)$-critical graphs with parameter $q \geq 1$. Theorem~\ref{thm-subdiv} seeks to establish an upper bound on this parameter $q$ for connected graphs which are not stars. Prior to proving Theorem~\ref{thm-subdiv}, we state two lemmas that are used frequently in studies on isolation of graphs. We also prove a separate lemma concerning the isolation number of subdivided induced subgraphs, and provide a construction of an infinite family of graphs which attain the upper bound. 

\begin{lem}[\cite{Borg1}]\label{lem-isol1}
    If $G$ is a graph, $\mathcal{F}$ is a set of graphs, $X\subseteq V(G)$, and $Y\subseteq N[X]$, then \[\iota(G,\mathcal{F}) \leq |X| + \iota(G-Y,\mathcal{F}).\]
\end{lem}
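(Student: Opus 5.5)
The plan is to take an $\iota(G-Y,\mathcal{F})$-set $S$ of $G-Y$ and set $D = X \cup S$, so that $|D| \le |X| + \iota(G-Y,\mathcal{F})$. It then suffices to show that $D$ is an $\mathcal{F}$-isolating set of $G$, which gives $\iota(G,\mathcal{F}) \le |D|$. Note that $X$ may meet $Y$ and $S \subseteq V(G)\setminus Y$.

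The main step is to compare the graphs left over after removing closed neighbourhoods. Suppose some subgraph $H$ of $G - N_G[D]$ is a copy of a graph $F \in \mathcal{F}$. Since $Y \subseteq N_G[X] \subseteq N_G[D]$, no vertex of $H$ lies in $Y$, so $H$ is a subgraph of $G - Y$. Moreover, $N_{G-Y}[S] \subseteq N_G[S] \subseteq N_G[D]$, hence $V(H) \cap N_{G-Y}[S] = \emptyset$. Therefore $H$ is a subgraph of $(G-Y) - N_{G-Y}[S]$.

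This contradicts the choice of $S$ as an $\mathcal{F}$-isolating set of $G-Y$. So no such $H$ exists, $D$ is $\mathcal{F}$-isolating in $G$, and the inequality follows.

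There is no real obstacle here. The only point needing care is the direction of the neighbourhood inclusion $N_{G-Y}[S] \subseteq N_G[S]$: deleting $Y$ can only shrink neighbourhoods, and this is exactly the direction we need. One should also note the degenerate case where $G-Y$ is empty, in which $S = \emptyset$ and the argument goes through unchanged.
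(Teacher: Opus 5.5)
Your proof is correct. The paper does not prove this lemma itself but cites it from Borg, and your argument is the standard one for it: $X$ together with a minimum $\mathcal{F}$-isolating set $S$ of $G-Y$ is $\mathcal{F}$-isolating in $G$, by exactly the two inclusions you use, $Y \subseteq N_G[X]$ and $N_{G-Y}[S] \subseteq N_G[S]$.
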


\begin{lem}[\cite{Borg1,Borgrsc}]\label{lem-isol2}
    If $G$ is a graph with components $G_1, G_2, \ldots, G_r$ and $\mathcal{F}$ is a set of connected graphs, then \[\iota(G,\mathcal{F}) = \sum_{i=1}^r\iota(G_i,\mathcal{F}).\]
\end{lem}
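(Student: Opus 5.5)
This lemma (Lemma~\ref{lem-isol2}) is the additivity of the $\mathcal{F}$-isolation number over components. The plan is to prove the two inequalities $\le$ and $\ge$ separately. Both rest on one fact: since every graph in $\mathcal{F}$ is connected, any copy of such a graph inside a subgraph of $G$ lies entirely within a single component $G_i$.

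\emph{Upper bound.} For each $i \in [r]$, choose an $\mathcal{F}$-isolating set $D_i$ of $G_i$ with $|D_i| = \iota(G_i,\mathcal{F})$, and set $D = \bigcup_{i=1}^r D_i$. There are no edges between distinct components, so $N_G[D] \cap V(G_i) = N_{G_i}[D_i]$. Consequently $G - N_G[D]$ is the disjoint union of the graphs $G_i - N_{G_i}[D_i]$.

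Now suppose some subgraph of $G - N_G[D]$ is a copy of $F \in \mathcal{F}$. Since $F$ is connected, that copy lies in a single component of $G-N_G[D]$. That component is contained in some $G_i - N_{G_i}[D_i]$, which contradicts the choice of $D_i$. Hence $D$ is $\mathcal{F}$-isolating, and $\iota(G,\mathcal{F}) \le \sum_i |D_i|$.

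\emph{Lower bound.} Let $D$ be a minimum $\mathcal{F}$-isolating set of $G$, and set $D_i = D \cap V(G_i)$; these sets partition $D$. Again $N_G[D] \cap V(G_i) = N_{G_i}[D_i]$, so $G_i - N_{G_i}[D_i]$ is an induced subgraph of $G - N_G[D]$. Any copy of $F \in \mathcal{F}$ in $G_i - N_{G_i}[D_i]$ would therefore be a copy in $G - N_G[D]$, which is impossible. So each $D_i$ is $\mathcal{F}$-isolating in $G_i$, and
\[
\iota(G,\mathcal{F}) = |D| = \sum_i |D_i| \ge \sum_i \iota(G_i,\mathcal{F}).
\]

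No step is a real obstacle. The only point needing care is the upper bound, where connectedness of the members of $\mathcal{F}$ is essential. Without it, a disconnected $F$ (for example $2K_1$) could have its pieces spread across several $G_i - N_{G_i}[D_i]$, and the union of the $D_i$ would fail to be isolating. I would state this explicitly when writing out that step.
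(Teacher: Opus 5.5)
Your proof is correct and complete, including your remark that connectedness of the members of $\mathcal{F}$ is needed only for the $\le$ direction. The paper does not prove this lemma itself but imports it from \cite{Borg1,Borgrsc}, and your argument (restricting a minimum $\mathcal{F}$-isolating set of $G$ to each component for $\ge$, and taking the union of minimum $\mathcal{F}$-isolating sets of the components for $\le$) is the standard proof of this fact.
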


\begin{lem}\label{lem-ISL}
    If $G$ is an $m$-edge graph, $X \subseteq V(G)$, $A \subseteq E(G)$, and $|A| \geq m - 1$, then $\iota(G_A) \geq \iota(G[X]_A)$.
\end{lem}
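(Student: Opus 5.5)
The plan is to compare minimum isolating sets directly. Write $H = G[X]_A$, meaning the graph obtained from $G[X]$ by subdividing the edges of $A \cap E(G[X])$, using the same subdivision vertices as in $G_A$. First I will check that $H$ is an induced subgraph of $G_A$. Its vertex set is $X$ together with the subdivision vertices of the edges of $A$ lying inside $X$. Its edges are exactly the edges of $G_A$ among these vertices: the two halves of each subdivided edge inside $X$, and the unsubdivided edges inside $X$. Given an $\iota(G_A)$-set $D$, I then aim to build an isolating set $D'$ of $H$ with $|D'| \le |D|$. This gives $\iota(H) \le \iota(G_A)$.

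The key structural step, and the only place where $|A| \ge m-1$ is used, is the following claim: every vertex $d \in V(G_A) \setminus V(H)$ has at most one neighbour in $V(H)$. There are two kinds of such vertices.
\begin{itemize}
\item If $d \in V(G)\setminus X$, then each neighbour of $d$ in $G_A$ is either a subdivision vertex of an edge incident to $d$, or an original vertex joined to $d$ by an unsubdivided edge. Subdivision vertices of edges incident to $d$ do not lie in $H$, since those edges are not inside $X$. Since at most one edge of $G$ is unsubdivided, $d$ has at most one neighbour in $X$.
\item If $d$ is the subdivision vertex of an edge $uv \in A$ not contained in $X$, then $N_{G_A}(d)=\{u,v\}$ and at most one of $u,v$ lies in $X$.
\end{itemize}
In both cases $|N_{G_A}[d] \cap V(H)| \le 1$.

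Now define $D'$ as follows. Keep every vertex of $D \cap V(H)$. Replace each $d \in D \setminus V(H)$ by its unique neighbour in $V(H)$ if one exists, and discard $d$ otherwise. Then $|D'| \le |D|$. Moreover $N_H[D'] \supseteq N_{G_A}[D] \cap V(H)$:
\begin{itemize}
\item for $d \in D\cap V(H)$ this holds because $H$ is induced, so $N_H[d] = N_{G_A}[d]\cap V(H)$;
\item for a replaced $d$ with neighbour $x$, we have $N_{G_A}[d] \cap V(H)=\{x\} \subseteq N_H[x]$.
\end{itemize}
Hence $H - N_H[D']$ is an induced subgraph of $G_A - N_{G_A}[D]$, which has no edges. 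So $D'$ is an isolating set of $H$, and $\iota(H) \le |D'| \le |D| = \iota(G_A)$.

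I expect the main obstacle to be the bookkeeping in the structural claim. One has to be careful that $H$ really is induced in $G_A$, that is, that no stray edge of $G_A$ joins two vertices of $H$ without being an edge of $H$. One must also handle the single possibly unsubdivided edge correctly, since it is exactly what allows an outside vertex to have a neighbour in $X$ at all. Everything else reduces to the monotonicity of closed neighbourhoods under passing to an induced subgraph.
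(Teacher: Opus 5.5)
Your proof is correct and follows the same idea as the paper's: take an $\iota(G_A)$-set, keep its vertices that lie in $G[X]_A$, and replace each remaining vertex by its neighbour in $G[X]_A$, of which there is at most one because $|A|\ge m-1$. Your version is tidier than the paper's. It isolates the ``at most one neighbour in $V(G[X]_A)$'' claim, it explicitly checks that $G[X]_A$ is an induced subgraph of $G_A$ (which the paper uses only implicitly), and it correctly gives $|D'|\le|D|$ rather than an equality.
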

\proof{
    Let $D$ be an $\iota(G_A)$-set. Let $D_X = D \cap V(G{[X]}_A)$. If $D_X$ is an isolating set of $G{[X]}_A$, then $\iota(G{[X]}_A) \leq |D_X| \leq |D| = \iota(G_A)$. Suppose $D_X$ is not an isolating set of $G{[X]}_A$. Then there exists a set $S \subseteq D \setminus V(G{[X]}_A)$ such that $D_X \cup S$ is an isolating set of $G{[X]}_A$. Let $S_X = N_{G_A}(S) \cap V(G{[X]}_A)$. Note that $S\subseteq N_{G_A}(V(G{[X]}_A)) \subseteq N_{G_A}(X)$.
    
    Suppose $S$ consists only of subdivision vertices in $V(G_A)\setminus V(G)$. Every vertex $u \in S$ has exactly one neighbour $u' \in S_X$ such that $N_{G_A}(u) \cap V(G{[X]}_A) = \{u'\} \subseteq N_{G{[X]}_A}[u']$. Therefore, $D_X \cup S_X$ is an isolating set of $G{[X]}_A$.

    Now suppose $S$ contains a vertex $v \in V(G)$ (which is not a subdivision vertex). By definition of $S$, $N_{G_A}(v) \cap V({G[X]}_A) = \{v'\} \subseteq S_X$ for some $v' \in V(G{[X]}_A)$. Note that $v v' \notin A$, so $|A| = m - 1$. Thus, $v'$ is the only neighbour of $v$ in $G[X]_A$, and $N_{G_A}(v)\setminus \{v'\} \subseteq V(G_A) \setminus (V(G) \cup V(G{[X]}_A))$. 
    Therefore, $D_X \cup S_X$ is an isolating set of $G{[X]}_A$.

    In both cases, $\iota(G{[X]}_A) \leq |D_X \cup S_X| = |D_X \cup S| \leq |D| = \iota(G_A)$.\qed{}
}

\begin{cons}\label{cons-I}\emph{
    Let $I_1$ be a $4$-vertex path $(v_{1}^{2},v_{1}^{1},v_{2}^{1},v_{3}^{1})$.
    Let $I_2$ be a $6$-vertex path $(v_{3}^{2},v_{2}^{2},v_{1}^{2},v_{1}^{1},v_{2}^{1},v_{3}^{1})$.
    For $k \geq 3$, let $B_1, B_2, \ldots, B_{k-2}$ be $3$-vertex paths such that $B_1, B_2, \ldots, B_{k-2}$ and $I_2$ are pairwise vertex-disjoint, and for $i \in [k-2]$, let $v_1^{i+2}, v_2^{i+2}, v_3^{i+2} \in V(B_i)$ such that $B_i =  (v_{1}^{i+2},v_{2}^{i+2},v_{3}^{i+2})$. For $k\geq 3$, let $I_k$ be the graph with $V(I_k) = V(I_2) \cup \bigcup_{i=1}^{k-2} V(B_i)$ and $E(I_k) = E(I_2) \cup \bigcup_{i=1}^{k-2} (E(B_i) \cup \{v_{1}^{1}v_{1}^{i+2}\})$. Let $\mathcal{I} = \{I_k \colon k \in \mathbb{N}\}$.}
\end{cons}

\begin{cons}\label{cons-M}\emph{
    Let $M_3$ be a $4$-vertex path $(w_0,w_1,w_2,w_3)$. For $k \geq 4$, let $w_4, \ldots, w_k$ be distinct vertices that are not in $V(M_3)$, and let $M_k$ be the graph with $V(M_k) = V(M_3) \cup \{w_i \colon i \in [k]\setminus[3]\}$ and $E(M_k) = E(M_3) \cup \{w_2w_i \colon i \in [k]\setminus[3]\}$. Let $\mathcal{M} = \{M_k \colon k \in \mathbb{N},k\ge 3\}$.}
\end{cons}


    \noindent\textbf{Proof of Theorem~\ref{thm-subdiv}.} If $\Delta(G) \leq 2$, then either $G\simeq C_3$ or $G$ is a $\{P_n,C_n\}$-graph for some $n \geq 4$. By Observation~\ref{obs-paths} and Observation~\ref{obs-cycles}, $G$ is $(\iota,q)$-critical where $q \leq E(G) - 1$. Thus, $\iota(G_A) \geq \iota(G) + 1$ for every $A \subseteq E(G)$ of size at least $m - 1$. Suppose $\Delta(G) \geq 3$.

    Suppose $\iota(G) = 1$. 
    If $G$ contains a triangle $T$, then $T_A$ is a $\{C_5,C_6\}$-graph, so $\iota(G_A) \geq \iota(T_A) = 2 = \iota(G) + 1$ by Lemma~\ref{lem-ISL}. Suppose that $G$ contains no triangle.
    Let $v$ be an isolating vertex of $G$ of largest degree. Then, $d_G(v) \geq 2$, because if we assume that $v$ is a leaf of $G$, then we obtain that its neighbour is an isolating vertex of $G$ whose degree is larger than $d_G(v)$. Since $G$ is a connected non-star graph containing no triangle, $ux \in E(G)$ for some $x \in N_G(v)$ and $u \in V(G)\setminus N[v]$. Since $d_G(v) \geq 2$, $G$ has a vertex $y$ in $N_G(v)\setminus\{x\}$. Since $G$ contains no triangle, $xy \notin E(G)$. If $uy \notin E(G)$, then $G{[\{v,x,y,u\}]}_A$ is a $\{P_6,P_7\}$-graph. If $uy \in E(G)$, then $G{[\{v,x,y,u\}]}_A$ is a $\{C_7,C_8\}$-graph. In any case, by Lemma~\ref{lem-ISL}, $\iota(G_A) \geq \iota(G{[\{v,x,y,u\}]}_A) = 2 = \iota(G) + 1$.

    Now suppose $\iota(G) \geq 2$. We proceed by induction on $n$.
    If $|A| = m - 1$, then let $e$ be the unique member of $E(G)\setminus A$. If $|A| = m$, then let $e \in E(G)$. Since $G$ is connected, $d_G(v) \geq 2$ for some $v \in e$. Let $G' = G-N_G[v]$ and let $S$ be the set of subdivision vertices of edges $xy$ such that $x \in N_G(v)$ and $y \in V(G')$. Note that every vertex $z$ in $S$ has exactly one neighbour $y$ in $V(G'_A)$. Let $D$ be an $\iota(G_A)$-set such that $D \cap S$ is of minimum size. Let $S' = N_{G_A}(D \cap S) \cap V(G'_A)$ and let $D' = (D\setminus S) \cup S'$. Since $N_{G_A}(D \cap S) \setminus N_G(v) = S'$, we note that $D'$ is an isolating set of $G'_A$. Since $d_G(v) \geq 2$ and $|A| \geq m - 1$, $|D \cap V(G{[N_G[v]]}_A)| \geq 1$. Therefore, by Corollary~\ref{cor-subdiv}, 
    \begin{equation}
        \iota(G_A) = |D| \geq |D'| + 1 \geq \iota(G'_A) + 1 \geq \iota(G') + 1. \label{eq-outside}
    \end{equation}
    If some inequality in~(\ref{eq-outside}) is strict, then by Lemma~\ref{lem-isol1} (with $X = \{v\}$ and $Y = N_G[v]$), $\iota(G_A) \geq \iota(G') + 2 \geq \iota(G) + 1$. Suppose equality in~(\ref{eq-outside}) holds throughout. Let $H_1, \ldots, H_r$ be the components of $G'$ such that $\iota(H_1) \geq \iota(H_2) \geq \cdots \geq \iota(H_r)$. By Lemma~\ref{lem-isol2}, $\iota(G'_A) = \sum_{i=1}^{r}\iota({(H_i)}_A)$. If for some $j \in [r]$, $H_j$ is not a star, then by the induction hypothesis, $\iota({(H_j)}_A) \geq \iota(H_j) + 1$. Thus, $\iota(G'_A) \geq \sum_{i=1}^{r}\iota(H_i) + 1 = \iota(G') + 1$, contradicting $\iota(G'_A)=\iota(G')$. Thus, every component of $G'$ is a star. Let $s = \max\{i \in [r]: \iota(H_i) \neq 0\}$. Recall from equality in~(\ref{eq-outside}) that $D'$ is an $\iota(G'_A)$-set. Since $|D'| = \iota(G'_A) = \sum_{i=1}^{r}\iota(H_i) = s$, we have $D' = \{u_1,\ldots,u_s\}$ where $u_i \in V({(H_i)}_A)$ for each $i \in [s]$.

    We claim $D \cap S = \emptyset$. Suppose for contradiction that $D\cap S \neq \emptyset$. Let $z \in D \cap S$. Since $D'$ consists only of vertices from components of $G'$ which are non-trivial stars, $z \in N_{G_A}(V(H_j))$ for some $j \in [s]$. By definition of $D'$, $z$ is the subdivision vertex of the edge $x_j u_j$ for some $x_j \in N_G(v)$ and some $u_j \in V(H_j)$. Note that $D' \cap V(H_j) = \{u_j\}$. Since $H_j$ is not a trivial star, $u_j y_j \in E(G)$ for some $y_j \in V(H_j)\setminus\{u_j\}$. Let $y'$ be the subdivision vertex of $u_j y_j$. Suppose $u_j \notin D$. Note that $N_{G_A}(y_j) \subseteq V(G_A)\setminus V(G)$. Since $z \in D$, $u_j \notin D$, and $|D' \cap V({(H_j)}_A)| = 1$, we have $D \cap V({(H_j)}_A) = \emptyset$. If a vertex of $N_{G_A}(y_j) \cap S$ is in $D$, then $y_j \in D'$, contradicting $|D' \cap V({(H_j)}_A)| \leq 1$. Thus, $D \cap N_{G_A}[y_j] = \emptyset$. Note that $N_{H_j}[y_j] \cap N_{G_A}[z] = \emptyset$, so $y_j y' \in E(G_A-N_{G_A}[D])$, contradicting that $D$ is an isolating set of $G_A$. 
    Thus, $u_j \in D$. Recall that $z \in D$ and $z \in N_{G_A}(u_j)$. Let $D^* = (D \setminus \{z\}) \cup \{x_j\}$. Note that $D^*$ is an $\iota(G_A)$-set such that $|D^* \cap S| < |D \cap S|$, contradicting the choice of $D$ with $D \cap S$ being of minimum size. Therefore, $D \cap S = \emptyset$.

    Suppose $u_j \notin V(H_j)$ for some $j \in [s]$. Thus, $u_j$ is a subdivision vertex in $V({(H_j)}_A)$. Since $\{u_j\}$ is an isolating set of ${(H_j)}_A$, we have $H_j \simeq K_2$. Let $V(H_j) = \{y_1,y_2\}$. Observe that $N_{G_A}(u_j) = \{y_1,y_2\}$ and ${(G-\{y_1,y_2\})}_A = {(G-V(H_j))}_A$. Suppose $G-V(H_j)$ is not a star. By the induction hypothesis, we have $|D \setminus \{u_j\}|=|D|-1 \geq \iota({(G-V(H_j))}_A) \geq \iota(G-V(H_j)) + 1$. Therefore, by Lemma~\ref{lem-isol1} (with $X = \{y_1\}$ and $Y = \{y_1,y_2\} = V(H_j)$), $\iota(G_A) = |D| \geq \iota(G-V(H_j)) + 2 \geq \iota(G) + 1$. Now suppose $G-V(H_j)$ is a star. Thus, $V(G) = N[v] \cup \{y_1,y_2\}$ and $\iota(G) = 2$. Since $G$ is connected, $x_1y_1 \in E(G)$ for some $x_1 \in N_G(v)$. If $N_G(v) \cap N_G(\{y_1,y_2\}) = \{x_1\}$, then $\iota(G) = |\{x_1\}| = 1$, a contradiction. Let $x_2 \in (N_G(v)\cap N_G(\{y_1,y_2\})) \setminus \{x_1\}$. If $N_G(y_i) = \{x_1,x_2\}$ for some $i \in [2]$, then $\iota(G) = 1$, a contradiction. Thus, $x_1y_1, x_2y_2 \in E(G)$ and $x_1y_2,x_2y_1 \notin E(G)$. Since $G-V(H_j)$ is a star, $x_1x_2 \notin E(G)$. Hence, $G[\{v,x_1,x_2,y_1,y_2\}] \simeq C_5$ and ${G[\{v,x_1,x_2,y_1,y_2\}]}_A$ is a $\{C_9,C_{10}\}$-graph. By Lemma~\ref{lem-ISL}, $\iota(G_A) \geq \iota({G[\{v,x_1,x_2,y_1,y_2\}]}_A) = 3 \geq \iota(G) + 1$. 
    
    Now suppose $u_i \in V(H_i)$ for every $i \in [s]$, i.e. no $u_i$ is a subdivision vertex. Since $D'$ is an $\iota(G'_A)$-set and $D' \cap V({(H_i)}_A) = \{u_i\}$ for every $i \in [s]$, we have $N_{H_i}[u_i] = V(H_i)$ for every $i \in [s]$. 
    Let $G^* = G-u_1$ and let $G_v^*$ be the component of $G^*$ such that $v \in V(G_v^*)$. Note that the components of $G^*$ consist of $G_v^*$ and possibly some isolated vertices in $V(H_1)\setminus \{u_1\}$. Since $N_{G_A}(u_1) \cap V({(G_v^*)}_A) = \emptyset$, $D \setminus \{u_1\}$ is an isolating set of ${(G_v^*)}_A$. Thus, $|D \setminus \{u_1\}| \geq \iota({(G_v^*)}_A)$. If $G_v^*$ is not a star, then by the induction hypothesis, $\iota({(G_v^*)}_A) \geq \iota(G_v^*) + 1$. Thus, by Lemma~\ref{lem-isol1} (with $X = Y = \{u_1\}$) and Lemma~\ref{lem-isol2}, $|D| \geq \iota(G_v^*) + 2 = \iota(G^*) + 2 \geq \iota(G) + 1$. If $G_v^*$ is a star, then $V(G_v^*) = N[v]$ and $G' = H_1$ (so $r=1$). Since $G$ is connected, $N_G(v) \cap N_G(V(H_1)) = \{u_1\}$. Let $x\in N_G(v) \cap N_G(u_1)$. Note that $x$ is an isolating vertex of $G$, contradicting $\iota(G) \geq 2$. Thus, the first part of the theorem is proved.
    
    We now prove the second part of the theorem. Consider the graph family $\mathcal{I}$ defined in Construction~\ref{cons-I} (an example is illustrated in Figure~\ref{fig:n3}). Observe that for every $k \in \mathbb{N}$, $\{v_1^i \colon i \in [k]\}$ is an $\iota(I_k)$-set, so $\iota(I_k) = k$. For every $k \in \mathbb{N}$, let $A_k^\iota = E(I_k) \setminus \{{v_1^1}{v_2^1},{v_2^1}{v_3^1}\}$. Note that $\iota(I_k) = \iota({(I_k)}_{A_k^\iota})$, so $I_k$ is $(\iota,q)$-critical for some $q > |E(I_k)|-2$. By the first part of the theorem statement, 
    $q = |E(I_k)| - 1$.  

    Now consider the graph family $\mathcal{M}$ defined in Construction~\ref{cons-M}. Observe that for every $k \geq 3$, $|E(M_k)| = k$.
    For every $k \geq 3$, let $A_k = E(M_k) \setminus \{w_0w_1,w_1w_2\}$. Note that $\iota(M_k) = \iota({(M_k)}_{A_k}) = |\{w_2\}|$, so $M_k$ is $(\iota,q)$-critical for some $q > k-2$. By the first part of the theorem statement, 
    $q = k-1$.\qed{}

\medskip

\vfill

\begin{figure}[ht]
    \centering
    \begin{tikzpicture}[scale=1.0]
        \node [v] (x1) at (0,0) {};
        \node [v] (x2) at (0,1) {};
        \node [v] (x3) at (0,2) {};
    
        \node [v] (y1) at (1,0) {};
        \node [v] (y2) at (1,1) {};
        \node [v] (y3) at (1,2) {};
    
        \node [v] (z1) at (2,0) {};
        \node [v] (z2) at (2,1) {};
        \node [v] (z3) at (2,2) {};
    
        \node [v] (v) at (3,1) {};
        \node [v] (x) at (4,1) {};
        \node [v] (y) at (5,1) {};
    
        \draw[e] (x1) -- (y1) -- (z1) -- (v);
        \draw[e] (x2) -- (y2) -- (z2) -- (v);
        \draw[e] (x3) -- (y3) -- (z3) -- (v);
        \draw[e] (v) -- (x) -- (y);
    
        \begin{scope}[on background layer]
            \draw[highlight] (x1) -- (y1);
            \draw[highlight] (y1) -- (z1);
            \draw[highlight] (z1) -- (v);
            \draw[highlight] (x2) -- (y2);
            \draw[highlight] (y2) -- (z2);
            \draw[highlight] (z2) -- (v);
            \draw[highlight] (x3) -- (y3);
            \draw[highlight] (y3) -- (z3);
            \draw[highlight] (z3) -- (v);
            \foreach \n in {x1, y1, z1, x2, y2, z2, x3, y3, z3, v} {
                \fill[gray!50] (\n) circle (4pt);
            }
        \end{scope}
    \end{tikzpicture}
    \caption{The graph $I_4$ with the edges of $A_4^\iota$ highlighted in grey. Note that $\iota(I_4) = \iota({(I_4)}_{A_4^\iota})$.}\label{fig:n3}
\end{figure}

\vfill

\section{\texorpdfstring{$(\iota,1)$}{(ι,1)}-critical graphs}\label{sec:1crit}

In this section, we focus on various characterizations of $(\iota,1)$-critical graphs. 

\subsection{The general case} 

The first characterization follows a similar pattern for the characterization of $(\gamma,1)$-critical graphs in~\cite{rad-domcrit}, given below.

\begin{thm}\label{thm-domcrit}\cite{rad-domcrit}
    A graph $G$ is $(\gamma,1)$-critical if and only if every $\gamma(G)$-set is a 2-packing.
\end{thm}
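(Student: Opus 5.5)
The natural plan is to prove both directions directly from the definition. Since the condition on $q-1=0$ edges is automatic, a graph $G$ is $(\gamma,1)$-critical exactly when $\gamma(G_e)>\gamma(G)$ for every $e\in E(G)$. By the domination analogue of the fact that subdividing an edge never decreases the parameter, this is equivalent to the following: there is no edge $e$ with $\gamma(G_e)=\gamma(G)$. So I will show that some edge $e$ has $\gamma(G_e)=\gamma(G)$ if and only if some $\gamma(G)$-set fails to be a $2$-packing.

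\emph{A non-packing $\gamma(G)$-set gives a harmless edge.} Let $D$ be a $\gamma(G)$-set that is not a $2$-packing, so $D$ has distinct vertices $x,y$ with $d_G(x,y)\le 2$.
\begin{itemize}
\item If $xy\in E(G)$, subdivide $e=xy$ with vertex $w$. Then $w$ is dominated by $x$, and no other domination relation changes. Hence $D$ dominates $G_e$.
\item If $d_G(x,y)=2$ with common neighbour $z$, subdivide $e=xz$ with vertex $w$. Then $w$ is dominated by $x$ and $z$ is dominated by $y$, so again $D$ dominates $G_e$.
\end{itemize}
In both cases $\gamma(G_e)\le|D|=\gamma(G)$, so $G$ is not $(\gamma,1)$-critical.

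\emph{A harmless edge gives a non-packing $\gamma(G)$-set.} Suppose $\gamma(G_e)=\gamma(G)$ for $e=uv$ with subdivision vertex $w$, and let $D'$ be a $\gamma(G_e)$-set.
\begin{itemize}
\item \textbf{Case $w\notin D'$.} Some endpoint, say $u$, lies in $D'$, since $w$ must be dominated. The vertex $v$ is dominated in $G_e$ either by $v\in D'$ or by some $x\in D'\cap N_G(v)$, and the latter uses an edge that also exists in $G$. Hence $D'$ dominates $G$, so it is a $\gamma(G)$-set. Moreover, either $u,v\in D'$ are adjacent in $G$, or $u,x\in D'$ satisfy $d_G(u,x)\le 2$. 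Either way $D'$ is not a $2$-packing.
\item \textbf{Case $w\in D'$.} Put $D_0=D'\setminus\{w\}$. In $G$, the set $D_0$ dominates every vertex except possibly $u$ and $v$.
  \begin{itemize}
  \item If $D_0$ dominates $u$, say via $x\in D_0\cap N_G(u)$ with $x\ne v$, then $D_0\cup\{v\}$ is a $\gamma(G)$-set containing $x$ and $v$ with $d_G(x,v)\le 2$. The symmetric argument applies if $D_0$ dominates $v$.
  \item If $D_0$ dominates neither $u$ nor $v$, take $D_0\cup\{u\}$, which is a $\gamma(G)$-set. Suppose $u$ has a neighbour $y\ne v$. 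Then $y$ is dominated in $G_e$ by some $z\in D'$. Here $z\ne w$ because $w$ is not adjacent to $y$, and $z\ne u$ because $u\notin D_0$. So $z\in D_0$ and $d_G(u,z)\le 2$, and the set is not a $2$-packing.
  \end{itemize}
\end{itemize}

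The main obstacle is this last subcase, where $w\in D'$ and neither endpoint of $e$ is dominated by $D_0$. The argument above needs an endpoint of degree at least $2$, and it breaks down exactly when $u$ and $v$ both have degree $1$, that is, when $uv$ forms a $K_2$ component. Indeed $K_2$ itself is a genuine exception: $\gamma(P_3)=\gamma(K_2)=1$, yet $\{u\}$ is trivially a $2$-packing. I would therefore state explicitly the standing assumption, implicit in~\cite{rad-domcrit}, that $G$ has no $K_2$ component (for instance, $G$ connected of order at least $3$). Under that assumption one of $u,v$ has another neighbour, and after relabelling it is $u$, so the argument goes through.
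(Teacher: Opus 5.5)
The paper does not prove this statement. It quotes it from \cite{rad-domcrit} only as motivation for Theorem~\ref{thm-ABC}, so there is no in-paper proof to compare against. Your proof is correct and self-contained, and it follows the template the paper uses for the isolation analogue. Your first direction is the domination version of Claim~\ref{claim-3p}: two close vertices of an optimal set let you subdivide an edge of a short path between them and keep the set valid. Your converse splits on whether the subdivision vertex lies in the optimal set of $G_e$ and swaps it for an endpoint, which mirrors the second half of the proof of Theorem~\ref{thm-ABC} (the sets $D_x$ and $D_y$ there).

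One step should be stated explicitly. In the case $w\in D'$, first note that $u,v\notin D_0$. Otherwise $D_0$ already dominates $G$, because $uv\in E(G)$, and this gives $\gamma(G)\le |D'|-1=\gamma(G)-1$. This fact is what lets you write the dominator of $u$ as some $x\in D_0\cap N_G(u)$ with $x\neq v$, and it also gives $u\notin D'$ in your last subcase. Also, each of your cases produces a dominating set of $G$ of size at most $|D'|$, so the monotonicity $\gamma(G)\le\gamma(G_e)$ you invoke at the start follows from your own argument.

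Your caveat about $K_2$ is correct and worth keeping. Since $\gamma(K_2)=\gamma(P_3)=1$, a graph with a $K_2$ component is never $(\gamma,1)$-critical. Yet every $\gamma$-set of $K_2$ (or of $2K_2$) is a $2$-packing, so the statement as quoted fails without a hypothesis such as ``no $K_2$ component'', for instance $G$ connected of order at least $3$. Your argument shows this is the only obstruction. The direction ``$(\gamma,1)$-critical $\Rightarrow$ every $\gamma(G)$-set is a $2$-packing'' holds for all graphs, and the converse holds whenever $G$ has no $K_2$ component. This does not affect the paper's own results, since the theorem is used only as motivation.
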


This approach for $(\iota,1)$-critical graphs requires another layer of intricacy to encapsulate a similar characterization due to the added difficulty imposed by the isolated vertices obtained from the deletion of the closed neighbourhood of a minimum isolating set of an $(\iota,1)$-critical graph. We begin by providing a definition which is the crux of this approach.

\begin{df}\label{def-ABC} 
    If $G$ is a graph, $A$, $B$ and $C$ are non-empty sets that partition $V(G)$,
    \begin{enumerate}[label=(\roman*), align=left, leftmargin=*]
        \item $A \cup C$ and $B$ are independent sets of $G$,
        \item $N(A) = B = N(C)$,
        \item $A$ is a $3$-packing of $G$, and
        \item no vertex in $A$ is a leaf of $G$,
    \end{enumerate}
    then we say that $(A, B, C)$ is a \emph{critical tripartition of $G$}.
\end{df}

Note that for a graph $G$ with a critical tripartition $(A,B,C)$, $A\cup C$ and $B$ are two independent sets such that $(A \cup C) \cup B = V(G)$ and $(A \cup C) \cap B = \emptyset$. Therefore, $G$ is a bipartite graph with partite sets $A \cup C$ and $B$. We note the following observation arising out of Definition~\ref{def-ABC} and the property that a graph is bipartite if and only if it has no odd cycle~\cite{konig}.

\newpage
\begin{obs}\label{obs-ABC}
    If $G$ is a graph that has a critical tripartition $(A,B,C)$, then
    \begin{enumerate}[label=(\roman*), align=left, leftmargin=*]
    \item no vertex in $A\cup C$ is a support vertex of $G$,
    \item $G$ contains no odd cycle,
    \item the distance between any two vertices in $A \cup C$ is even, and
    \item every leaf of $G$ is in $C$.
    \end{enumerate}
\end{obs}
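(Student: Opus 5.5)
We derive all four items directly from Definition~\ref{def-ABC}, using the fact noted above that $G$ is bipartite with partite sets $A\cup C$ and $B$.

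For (ii), bipartiteness together with K\"onig's characterization~\cite{konig} shows that $G$ has no odd cycle. For (iii), let $x,y\in A\cup C$ lie in the same component, and take a shortest $x$-$y$ path. Its vertices alternate between $A\cup C$ and $B$, because both sets are independent by (i) of the definition. Since the path starts and ends in $A\cup C$, its length is even. If $x$ and $y$ lie in different components, the distance is infinite; we read the statement as concerning connected graphs, or as vacuous in that case.

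For (iv), let $\ell$ be a leaf with unique neighbour $w$. Condition (iv) of the definition excludes $\ell\in A$. Suppose $\ell\in B$. By (ii) of the definition we have $\ell\in N(A)$ and $\ell\in N(C)$, so $\ell$ has a neighbour in $A$ and a neighbour in $C$. These neighbours are distinct because $A\cap C=\emptyset$, which contradicts $d_G(\ell)=1$. Hence $\ell\in C$.

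For (i), suppose some $x\in A\cup C$ is a support vertex, adjacent to a leaf $\ell$. By (iv), $\ell\in C\subseteq A\cup C$. Then $x\ell$ is an edge inside $A\cup C$, contradicting the independence of $A\cup C$. It is therefore convenient to prove (iv) before (i). There is no genuine obstacle in any of these steps. The only point needing care is (iii) for disconnected graphs, where the distance convention has to be stated explicitly.
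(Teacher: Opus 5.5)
Your proof is correct and takes essentially the route the paper indicates. The paper only remarks that $G$ is bipartite with parts $A\cup C$ and $B$ and reads the four items off Definition~\ref{def-ABC} together with K\"onig's characterization; you supply those details, including the sensible choice to prove (iv) before (i) and the remark that (iii) concerns vertices in the same component.
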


\begin{thm}\label{thm-ABC}
    A connected graph $G$ is $(\iota,1)$-critical if and only if $(D,N(D),V(G)\setminus N[D])$ is a critical tripartition of $G$ for each $\iota(G)$-set $D$.
\end{thm}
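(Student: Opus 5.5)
The plan is to prove the two directions separately. In both, (ι,1)-criticality is used in the form ``$\iota(G_e)>\iota(G)$ for every $e\in E(G)$''. By the Proposition of \cite{DLMZZ26} we always have $\iota(G_e)\ge \iota(G)$. So to violate criticality it suffices to exhibit one edge $e$ and an isolating set of $G_e$ of size $\iota(G)$. Throughout, let $w$ denote the subdivision vertex of $e$.

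\emph{($\Rightarrow$).} Assume $G$ is $(\iota,1)$-critical and let $D$ be an $\iota(G)$-set, with $B=N(D)$ and $C=V(G)\setminus N[D]$. Each condition of Definition~\ref{def-ABC} is checked by contradiction: assuming it fails, I produce an edge $e$ such that $D$, or a slight modification of $D$, still isolates $G_e$.
\begin{enumerate}[label=(\alph*), align=left, leftmargin=*]
\item Independence. $C$ is independent because $D$ is isolating, and there are no $D$--$C$ edges by definition of $C$. If there were an edge inside $D$ or inside $B$, subdividing it keeps both endpoints in $N[D]$ and makes $w$ dominated, so $D$ isolates $G_e$.
\item $N(C)=B$. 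Suppose some $b\in B$ has no neighbour in $C$. Subdivide an edge $db$ with $d\in D$. By (a), all remaining neighbours of $b$ lie in $N[D]$, and $w\in N[d]$, so every edge of $G_e$ still has an endpoint in $N_{G_e}[D]$. Hence $D$ isolates $G_e$, a contradiction. This also forces $C\neq\emptyset$ (for $B\ne\emptyset$ by connectivity), and $N(A)=B$ holds trivially.
\item 3-packing. Distance $1$ is excluded by (a), and distance $3$ would force an edge inside $B$. At distance $2$, say $d_1,d_2\in D$ share a neighbour $b$; subdividing $d_1b$ leaves $b$ dominated by $d_2$.
\item No leaf in $D$. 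If $d\in D$ is a leaf with neighbour $b$, subdivide $db$ and use $D^*=(D\setminus\{d\})\cup\{b\}$. In $G_e$ we have $w\in N[b]$, so the only edge newly touching undominated vertices is $dw$, which is covered by $w$. Every other vertex of $N[D]$ remains dominated, since $d$ dominated only $b$ and itself.
\end{enumerate}

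\emph{($\Leftarrow$).} Assume every $\iota(G)$-set induces a critical tripartition, and suppose for contradiction that $\iota(G_e)=\iota(G)$ for some $e=uv$, with $D'$ an $\iota(G_e)$-set.

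\emph{Case $w\notin D'$.} Then $D'\subseteq V(G)$ and $N_G[D']\supseteq N_{G_e}[D']\cap V(G)$, so $D'$ is an $\iota(G)$-set of $G$ with tripartition $(A,B,C)$. Since $G$ is bipartite with sides $A\cup C$ and $B$, one endpoint of $e$, say $b$, lies in $B$.
\begin{enumerate}[label=(\alph*), align=left, leftmargin=*]
\item If the other endpoint is $c\in C$: then $w$ and $c$ are both outside $N_{G_e}[D']$, so the edge $wc$ survives, a contradiction.
\item If the other endpoint is $a\in A$: by the 3-packing property $b$ has no other neighbour in $D'$, so $b$ is undominated in $G_e$. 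By $N(C)=B$, $b$ has a neighbour $c\in C$, and the edge $bc$ survives, a contradiction.
\end{enumerate}

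\emph{Case $w\in D'$.} This is the step I expect to need the most care. Set $D=(D'\setminus\{w\})\cup\{u\}$. Since $N_G[u]\supseteq\{u,v\}=N_{G_e}(w)$, $D$ isolates $G$, so it is an $\iota(G)$-set with tripartition $(A,B,C)$ and $u\in A$. Moreover $u\notin D'$, since otherwise $|D|<\iota(G)$.

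By condition (iv), $u$ is not a leaf, so it has a neighbour $b'\ne v$, and $b'\in B$. By the 3-packing property, $u$ is the only neighbour of $b'$ in $D$. By $N(C)=B$, $b'$ has a neighbour $c'\in C$.

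In $G_e$ with the set $D'$: $b'$ is not dominated, because its only $D$-neighbour $u$ is not in $D'$ and $w$ is not adjacent to $b'$. Likewise $c'$ is not dominated, because $N_{G_e}[D']\cap V(G)\subseteq N_G[D]$ and $c'\notin N_G[D]$. So the edge $b'c'$ survives, contradicting that $D'$ is isolating. This completes the plan for both directions.
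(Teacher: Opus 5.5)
Your proof is correct and has the same skeleton as the paper's. In the forward direction, each condition of Definition~\ref{def-ABC} is forced by exhibiting an isolating set of some $G_e$ of size $\iota(G)$. In the converse, you split on whether the subdivision vertex lies in the $\iota(G_e)$-set $D'$, and handle $w\in D'$ with the same swap the paper uses to form $D_x$. The local arguments differ in three ways, and yours are somewhat leaner:
\begin{itemize}
\item For condition (iv), the paper first sets aside $|D|=1$ as a star. It then proves that every vertex of $D$ has a partner in $D$ at distance exactly $4$, and applies its $3$-packing claim to the swapped $\iota(G)$-set. Your direct swap $d\mapsto b$ in $G_{db}$ needs neither the auxiliary claim nor the case split.
\item You prove independence of $D$ and of $B$ first. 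This gives the distance-$1$ and distance-$3$ cases of the $3$-packing property for free, so only distance $2$ needs a subdivision argument. It also sidesteps the paper's path-based claim, where one has to pick the right edge of the path to subdivide.
\item In the case $w\notin D'$ of the converse, you use the bipartition: one end of $e$ is in $B$, and the other is in $A$ or $C$. This replaces the paper's case analysis on leaves and degrees.
\end{itemize}

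Two small slips should be repaired.
\begin{itemize}
\item When the subdivided edge lies inside $B$, the vertex $w$ is not dominated, since neither of its neighbours is in $D$. The argument still works because both neighbours of $w$ are dominated, so $w$ is an isolated vertex of $G_e-N_{G_e}[D]$.
\item In the case $w\notin D'$, the containment $N_G[D']\supseteq N_{G_e}[D']\cap V(G)$ does not by itself show that $D'$ isolates $G$, because $uv\notin E(G_e)$. You must add that $u$ or $v$ lies in $N_G[D']$. Otherwise $u,v\notin D'$, so both $w$ and $u$ are undominated in $G_e$, and the edge $uw$ survives in $G_e-N_{G_e}[D']$.
\end{itemize}
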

\proof{
    Let $G$ be a connected $(\iota,1)$-critical graph. Let $D$ be an $\iota(G)$-set.
    Let $R = V(G) \setminus N[D]$. Then, no vertex in $D$ is adjacent to a vertex in $R$. By the choice of $D$, $R$ is an independent set of $G$.

    \begin{claim}\label{claim-3p} $D$ is a $3$-packing of $G$.
    \end{claim}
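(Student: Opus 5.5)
We argue by contradiction, using one observation about subdivisions: \emph{if an edge $e=xy$ of $G$ has both ends in $N[D]$, then $D$ is still an isolating set of $G_e$.} To see this, let $s$ be the subdivision vertex of $e$. The closed neighbourhood of $D$ in $G_e$ still contains every vertex of $N_G[D]$, because a vertex of $D$ loses a neighbour only when it is an end of $e$. In that case the lost neighbour is the other end of $e$, and that end lies in $N[D]$ by hypothesis. Consequently the vertex set of $G_e-N_{G_e}[D]$ is contained in $R\cup\{s\}$. The vertex $s$ has exactly the neighbours $x,y$, both in $N_{G_e}[D]$, so $s$ is isolated there if present. Subdividing $e$ creates no edge between vertices of $R$, and $R$ is independent by the choice of $D$. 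Hence $G_e-N_{G_e}[D]$ has no edges, and $\iota(G_e)\le |D|=\iota(G)$. Combined with the proposition of~\cite{DLMZZ26} that $\iota(G)\leq\iota(G_e)$, this gives $\iota(G_e)=\iota(G)$, which contradicts $G$ being $(\iota,1)$-critical.

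It therefore suffices to show the following: if $D$ is not a $3$-packing, then some edge has both ends in $N[D]$. Suppose $u,w\in D$ are distinct with $d_G(u,w)\le 3$, and take a shortest $u$-$w$ path. We split into three cases by its length.
\begin{enumerate}[label=(\roman*), align=left, leftmargin=*]
\item If $d_G(u,w)=1$, take $e=uw$; both ends lie in $D\subseteq N[D]$.
\item If the path is $(u,x,w)$, take $e=ux$; here $u\in D$ and $x\in N(u)$.
\item If the path is $(u,x,y,w)$, take $e=xy$; here $x\in N(u)$ and $y\in N(w)$, so both ends lie in $N[D]$.
\end{enumerate}
In every case the observation above yields the contradiction, so $D$ is a $3$-packing.

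The only step requiring care is the verification of the observation itself, specifically two points. First, subdividing an edge incident to a vertex of $D$ does not remove any vertex from $N[D]$, because the other end of $e$ is itself in $N[D]$. Second, the new vertex $s$ cannot be an end of a surviving edge, because both of its neighbours are dominated. Everything else is immediate from the definitions and the lower bound $\iota(G)\le\iota(G_e)$.
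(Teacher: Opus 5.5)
Your strategy matches the paper's: take two vertices of $D$ at distance at most $3$, subdivide an edge of a shortest path joining them, and check that $D$ still isolates $G_e$, which contradicts $(\iota,1)$-criticality. The gap is in your general observation, which is false as stated. You argue that $N_G[D]\subseteq N_{G_e}[D]$ because the neighbour lost by a vertex of $D$ is the other end of $e$, which ``lies in $N[D]$ by hypothesis''. But the hypothesis only places that end in $N_G[D]$, and it may be there solely because of the edge $e$ itself, so the argument is circular. A counterexample: let $G$ be the path $(d,z,r)$ and $D=\{d\}$. Then $D$ isolates $G$ and both ends of $e=dz$ lie in $N[D]$. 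Yet in $G_e=(d,s,z,r)$ we have $N_{G_e}[D]=\{d,s\}$, and the edge $zr$ survives. So the reduction ``it suffices to find an edge with both ends in $N[D]$'' is not valid.

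The repair is local. Require that each end of $e$ either lies in $D$ or has a neighbour in $D$ via an edge other than $e$; equivalently, both ends lie in $N_{G_e}[D]$. Under this hypothesis the rest of your verification (vertices left undominated lie in $R\cup\{s\}$, $s$ is isolated, $R$ is independent) is correct. Your three choices of $e$ do satisfy it, but in case (ii) for a different reason than the one you cite. What keeps $x$ dominated is the edge $xw$ with $w\in D$; the fact $x\in N(u)$ is precisely the adjacency that subdividing $ux$ destroys. In case (iii) the edges $ux$ and $yw$ are untouched, and in case (i) both ends are in $D$.

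The paper's write-up glosses over the same subtlety, since it lets $e$ be an arbitrary edge of the shortest path. In your notation, take the path $(u,x,y,w)$ and $e=ux$: the vertex $x$ can lose its only neighbour in $D$ while keeping a neighbour outside $N[D]$. So your choice of the middle edge in case (iii) is genuinely needed, not just convenient.
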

    \proof{
        Suppose that $D$ is not a $3$-packing of $G$. Then, $D$ has two distinct vertices $x$ and $y$ with $d(x,y) \leq 3$. Let $P$ be a shortest path from $x$ to $y$ in $G$. Since $d(x,y) \leq 3$, $|V(P)| \leq 4$ and $V(P) = N_P[\{x,y\}]$. Let $e \in E(P)$ and consider the graph $G_e$. 
        We have $d_{G_e}(x,y) \leq 4$, $|V(P_e)| \leq 5$ and $|V(P_e) \setminus N_{P_e}[\{x,y\}]| \leq 1$. Thus, $D$ is an $\iota(G_e)$-set. This contradicts the assumption that $G$ is $(\iota,1)$-critical. \qed{}
    }\medskip

    \noindent By Claim~\ref{claim-3p} and the definition of $R$, $D \cup R$ is an independent set of $G$.

    \begin{claim}\label{claim-NDind}
        $N_G(D)$ is an independent set of $G$.
    \end{claim}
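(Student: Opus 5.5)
Assume, for a contradiction, that $N_G(D)$ contains two adjacent vertices $x$ and $y$. I will show that subdividing the edge $e=xy$ does not raise the isolation number: $D$ will still be an isolating set of $G_e$, so $\iota(G_e)=\iota(G)$. This contradicts the assumption that $G$ is $(\iota,1)$-critical.

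Let $w$ be the subdivision vertex of $e$. The vertices of $G_e$ outside $N_{G_e}[D]$ are exactly those of $R$, together with $w$ unless $w$ has a neighbour in $D$. Its only neighbours are $x$ and $y$, and neither lies in $D$, because $D$ is independent by Claim~\ref{claim-3p}. So $V(G_e)\setminus N_{G_e}[D]=R\cup\{w\}$. To conclude that $D$ isolates $G_e$, it suffices to check three things.
\begin{enumerate}[label=(\roman*), align=left, leftmargin=*]
\item $R$ is independent in $G_e$. This holds because $R$ is independent in $G$, and subdividing $e$ adds no edges among vertices of $G$.
\item $w$ has no neighbour in $R$. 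This holds because $N_{G_e}(w)=\{x,y\}\subseteq N_G(D)$, which is disjoint from $R$.
\item No edge of $G_e-N_{G_e}[D]$ involves $w$, since $w$ is adjacent only to $x$ and $y$.
\end{enumerate}
Hence $G_e-N_{G_e}[D]$ has no edges, $D$ is an isolating set of $G_e$, and together with the Proposition $\iota(G)\le\iota(G_e)$ this gives $\iota(G_e)=\iota(G)$, the desired contradiction.

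The main point needing care is the claim that $x,y\in N_G(D)$ stay dominated after the subdivision. Each of $x$ and $y$ has a neighbour in $D$, and that neighbour is joined to it by an edge different from $e$, because $e$ has both ends outside $D$. So these edges survive in $G_e$. I expect the argument to be short and to contain no real obstacle beyond this observation and the fact that $D\cap N(D)=\emptyset$.
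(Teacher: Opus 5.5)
Your proof is correct and follows the paper's argument. You subdivide the edge joining two vertices of $N_G(D)$, note that $V(G_e)\setminus N_{G_e}[D]=R\cup\{w\}$ is independent, and conclude that $D$ still isolates $G_e$, contradicting $(\iota,1)$-criticality. You are slightly more careful than the paper in checking that $x$ and $y$ remain dominated; the appeal to Claim~\ref{claim-3p} for $x,y\notin D$ is unnecessary, since the paper defines $N_G(D)=N_G[D]\setminus D$.
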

    \proof{
        Suppose $uv \in E(G)$ for some $u,v \in N_G(D)$. Let $x$ be the subdivision vertex of $uv$. Note that $N_{G_{uv}}(x) = \{u,v\} \subseteq N_G(D)$. Hence, $N_{G_{uv}}[x]\setminus N_{G_{uv}}[D] =\{x\}$ 
        and $V(G_{uv})\setminus N_{G_{uv}}[D] = R\cup\{x\}$. Thus, $\iota(G_{uv}) \leq |D| = \iota(G)$, contradicting $G$ being $(\iota,1)$-critical. Therefore, $N_G(D)$ is an independent set of $G$.\qed{}
    }

    \begin{claim}\label{claim-nxr}
        For every $x \in N_G(D)$, $N_G(x) \cap R \neq \emptyset$.
    \end{claim}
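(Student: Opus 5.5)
We argue by contradiction, in the same spirit as Claims~\ref{claim-3p} and~\ref{claim-NDind}: assuming that some $x \in N_G(D)$ has no neighbour in $R$, we exhibit an edge $e$ with $\iota(G_e) \le |D| = \iota(G)$. Since $G$ is $(\iota,1)$-critical, this is impossible.

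\textbf{Setup.} Suppose $x \in N_G(D)$ and $N_G(x) \cap R = \emptyset$.
\begin{itemize}[align=left, leftmargin=*]
\item By Claim~\ref{claim-NDind}, $x$ has no neighbour in $N_G(D)$.
\item Hence $N_G(x) \subseteq D$.
\item By Claim~\ref{claim-3p}, $D$ is a $3$-packing, so $x$ has a unique neighbour $d \in D$.
\item Therefore $N_G(x) = \{d\}$, so $x$ is a leaf of $G$ attached to $d$.
\end{itemize}

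\textbf{The subdivision.} Subdivide the edge $e = dx$ with a new vertex $w$. In $G_e$ the vertex $d$ is adjacent to $w$ but no longer to $x$. The only neighbour of $x$ is $w$, so $x$ is isolated after removing $N_{G_e}[D]$ and contributes no edge. Every other vertex keeps its status: all of $N_G(D) \setminus \{x\}$ is still dominated, because $x$ is its only neighbour of $d$ that changed. The set $V(G_e) \setminus N_{G_e}[D]$ is then $R \cup \{x\}$. This set is independent, since $R$ is independent and $x$ has no neighbour in $R$. So $D$ is an isolating set of $G_e$, giving $\iota(G_e) \le \iota(G)$. This contradicts $(\iota,1)$-criticality.

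\textbf{Where care is needed.} The only delicate point is a $d$ whose other neighbours are dominated only through $d$. This is harmless, because subdividing $dx$ leaves the edges from $d$ to its other neighbours intact. In the degenerate case $N_G(d) = \{x\}$, connectivity forces $G = K_2$, which is a star. By Theorem~\ref{thm-subdiv} it is not $(\iota,1)$-critical (and $\iota(P_3) = 1 = \iota(K_2)$ directly), so it is excluded. Hence $N_G(x) \cap R \neq \emptyset$.
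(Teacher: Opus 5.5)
Your proof is correct and is essentially the paper's argument: use Claim~\ref{claim-NDind} to get $N_G(x)\subseteq D$, subdivide an edge from $x$ to a neighbour in $D$, and observe that $D$ still isolates $G_e$, contradicting $(\iota,1)$-criticality. The paper works directly with $N_G(x)\subseteq D$, so your detour through the $3$-packing property (to make $x$ a leaf) and your separate $K_2$ case are unnecessary, since your general argument already covers $K_2$; also, Theorem~\ref{thm-subdiv} concerns non-star graphs and cannot be cited for $K_2$, so it is your direct check $\iota(P_3)=\iota(K_2)=1$ that settles that case.
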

    \proof{
        Suppose for contradiction that there exists some $x \in N_G(D)$ such that $N_G(x) \cap R = \emptyset$. Let $w \in N_G(x) \cap D$. By Claim~\ref{claim-NDind}, $N_G(x) \subseteq D$. Thus, any edges of $G_{wx}$ not in $E(G)$ have a vertex in $N_{G_{wx}}[D]$. Hence, $\iota(G_{wx}) \leq |D| = \iota(G)$. This contradicts $G$ being $(\iota,1)$-critical.\qed{}
    }\medskip

    \noindent By Claim~\ref{claim-nxr}, $N_G(R) = N_G(D)$.

    \begin{claim}\label{claim-xy4}
        If $|D| \geq 2$, then for every $x \in D$, there exists some $y \in D\setminus\{x\}$ such that $d_G(x,y) = 4$.
    \end{claim}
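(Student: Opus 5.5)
The plan is to argue purely from the structure already established: $D$ is a $3$-packing (Claim~\ref{claim-3p}), $R = V(G)\setminus N[D]$ is independent (by the choice of $D$), and $N_G(D)$ is independent (Claim~\ref{claim-NDind}). No further subdivision argument should be needed. The key observation is that every vertex of $G$ lies in exactly one of $D$, $N_G(D)$ and $R$, and two of these classes are independent. So a shortest path starting at $x$ is forced to alternate between classes in a controlled way, and this lets us locate a vertex of $N_G(D)$ at distance exactly $3$ from $x$.

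Fix $x \in D$ and assume $|D| \ge 2$. Since $G$ is connected, pick $y_0 \in D\setminus\{x\}$ and a shortest $x$-$y_0$ path $P=(x=p_0,p_1,p_2,\ldots)$. By Claim~\ref{claim-3p}, $d_G(x,y_0) \ge 4$, so $p_3$ exists and $d_G(x,p_3)=3$. We now identify the classes of $p_1,p_2,p_3$ in turn.
\begin{enumerate}[label=(\arabic*), align=left, leftmargin=*]
\item Since $p_1$ is adjacent to $x \in D$ and $D$ is independent (being a $3$-packing), $p_1 \in N_G(D)$.
\item The vertex $p_2$ is not in $D$, since $d_G(x,p_2)=2$ and $D$ is a $3$-packing. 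It is not in $N_G(D)$, since it is adjacent to $p_1 \in N_G(D)$ and $N_G(D)$ is independent. Hence $p_2 \in R$.
\item The vertex $p_3$ is adjacent to $p_2 \in R$, and $R$ is independent, so $p_3 \notin R$. It is not in $D$, since $d_G(x,p_3)=3$ and $D$ is a $3$-packing. Hence $p_3 \in N_G(D)$.
\end{enumerate}

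By (3), there is some $y \in D$ with $p_3 y \in E(G)$. If $y = x$, then $d_G(x,p_3) \le 1$, a contradiction, so $y \ne x$. The triangle inequality gives $d_G(x,y) \le 3+1 = 4$, and Claim~\ref{claim-3p} gives $d_G(x,y) \ge 4$. Hence $d_G(x,y)=4$, as required.

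I do not expect a genuine obstacle. The only care needed is in steps (2) and (3), which must use both independence facts (of $N_G(D)$ and of $R$) together with the packing property to exclude each alternative class. It is also important to note explicitly that $p_3$ exists, which uses $|D|\ge 2$ and connectivity.
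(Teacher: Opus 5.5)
Your proof is correct and follows essentially the same route as the paper. Both arguments take a shortest path from $x$ to another vertex of $D$, show that the vertex at distance $2$ is undominated, and conclude that the vertex at distance $3$ must be adjacent to some $y\in D\setminus\{x\}$, which is then at distance exactly $4$. The paper frames this as a contradiction and places the distance-$2$ vertex outside $N_G[D]$ using only the $3$-packing property, whereas you argue directly and invoke the independence of $N_G(D)$ there, which is valid but not needed.
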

    \proof{
        Suppose for contradiction that there exists some $x \in D$ such that $d_G(x,y) \geq 5$ for every $y \in D\setminus\{x\}$. Let $P$ be a shortest path from $x$ to some $x'\in D\setminus\{x\}$ in $G$. Let $P^* = P - N_P[\{x,x'\}]$. Since $d_G(x,x') \geq 5$, $|V(P)| \geq 6$ and $P^* \simeq P_k$ for some $k \geq 2$. Let $v,w \in V(P^*)$ such that $d_G(x,v) = 2$ and $d_G(x,w) = 3$. Thus, $vw \in E(G)$. Since $D$ is an $\iota(G)$-set, $N_G[D] \cap \{v,w\} \neq \emptyset$. However, by Claim~\ref{claim-3p}, $D$ is a $3$-packing in $G$, and hence $D \cap N_G[v] = \emptyset$. Thus, $D \cap (N_G(w)\setminus N_G[v]) \neq \emptyset$. Note that $d_G(x,z) = 4$ for each $z \in D \cap (N_G(w)\setminus N_G[v])$, which contradicts that $d_G(x,y) \geq 5$ for every $y \in D\setminus\{x\}$.\qed{}
    }

    \begin{claim}
        For every $x \in D$, $d(x) \geq 2$.
    \end{claim}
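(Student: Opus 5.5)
We argue by contradiction. Fix $x \in D$ and suppose $d_G(x) \le 1$. Since $G$ is connected and $(\iota,1)$-critical, it is a connected non-star graph (a star is never $(\iota,1)$-critical, because subdividing its edges does not increase $\iota$). In particular $G$ has at least one edge, so $d_G(x) \neq 0$. Hence $d_G(x) = 1$; let $w$ be the unique neighbour of $x$. Then $w \in N_G(D)$, and by Claim~\ref{claim-nxr} there is some $r \in R$ with $wr \in E(G)$.

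The plan is to exchange $x$ for $w$. Set $D_1 = (D\setminus\{x\})\cup\{w\}$. Then $N_G[D_1] \supseteq N_G[D]$, because $N_G[x] = \{x,w\} \subseteq N_G[w]$. Therefore $D_1$ is again an isolating set of $G$ of size $|D| = \iota(G)$, so $D_1$ is an $\iota(G)$-set. Moreover $r \in N_G[D_1]$, since $r$ is adjacent to $w$.

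The contradiction will come from applying Claim~\ref{claim-3p} or Claim~\ref{claim-NDind}/Claim~\ref{claim-nxr} to $D_1$. These claims were proved for an arbitrary $\iota(G)$-set, so they apply to $D_1$.

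\textbf{Case $|D| \ge 2$.} By Claim~\ref{claim-xy4} there is $y \in D\setminus\{x\}$ with $d_G(x,y)=4$. Every path from $x$ passes through $w$, so $d_G(w,y) = 3$. Both $w$ and $y$ lie in $D_1$, which contradicts Claim~\ref{claim-3p} applied to $D_1$.

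\textbf{Case $|D| = 1$.} Here $D = \{x\}$, and $\{w\}$ is also an $\iota(G)$-set. Apply Claim~\ref{claim-nxr} to $D_1 = \{w\}$: every vertex of $N_G(w)$ must have a neighbour outside $N_G[w]$. The vertex $x \in N_G(w)$ has $N_G(x) = \{w\}$, so it has no such neighbour. This is a contradiction.

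Hence $d_G(x) \ge 2$.

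The step I expect to need the most care is confirming that the earlier claims really apply to any $\iota(G)$-set. They do: each was proved for an arbitrary $\iota(G)$-set $D$ with $R$ defined from that $D$, so re-applying them to $D_1$ only requires recomputing $R$ for $D_1$.
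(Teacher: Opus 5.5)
Your proof is correct, and in the case $|D|\ge 2$ it is exactly the paper's argument: you replace the leaf $x$ by its neighbour $w$, then use the vertex $y$ at distance $4$ from Claim~\ref{claim-xy4} to contradict Claim~\ref{claim-3p} for $(D\setminus\{x\})\cup\{w\}$. For $|D|=1$ the paper instead observes that $G$ must be a star, whereas you apply Claim~\ref{claim-nxr} to $\{w\}$; that argument in fact works verbatim for $D_1=(D\setminus\{x\})\cup\{w\}$ with any $|D|$ (because $x\in N_G(D_1)$ and $N_G(x)=\{w\}\subseteq D_1$), so the case split and Claim~\ref{claim-xy4} are actually unnecessary here.
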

    \proof{
        Suppose $d_G(x) = 1$ for some $x \in D$. If $|D| = 1$, then $G \simeq K_{1,k}$ for some $k \in \mathbb{N}$, which contradicts that $G$ is $(\iota,1)$-critical. Suppose $|D|\geq 2$. By Claim~\ref{claim-xy4}, there exists some $y \in D\setminus\{x\}$ such that $d_G(x,y) = 4$. Let $w \in N_G(x)$.
        Since $N_G[x] \subseteq N_G[w]$, $(D\setminus\{x\}) \cup \{w\}$ is also an $\iota(G)$-set with $d_G(w,y) = 3$, which contradicts that every $\iota(G)$-set is a $3$-packing in $G$ (Claim~\ref{claim-3p}).\qed{}
    }\medskip

    In conclusion, $D \cup N_G(D) \cup R = V(G)$, $D \cup R$ and $N_G(D)$ are independent sets of $G$, $N_G(D) = N_G(R)$, $D$ is a $3$-packing in $G$, and no vertex in $D$ is a leaf. Therefore, $(D,N_G(D),R)$ is a critical tripartition of $G$.\medskip

    We now prove the converse. Let $G$ be a connected graph such that $(D,N(D),\allowbreak V(G)\setminus N[D])$ is a critical tripartition of $G$ for each $\iota(G)$-set $D$. Suppose that $G$ is not $(\iota,1)$-critical. Then, $\iota(G_{xy}) = \iota(G)$ for some $xy \in E(G)$. Let $D'$ be an $\iota(G_{xy})$-set and let $z \in V(G_{xy})\setminus V(G)$ be the subdivision vertex of $xy$. Note that $|D'| = \iota(G)$.

    Suppose $z \notin D'$. Since $D' \subseteq V(G)$, $D'$ is also an $\iota(G)$-set. Let $R = V(G)\setminus N_G[D']$. Thus, $(D',N_G(D'),R)$ is a critical tripartition of $G$. Since $D'$ is an isolating set of $G_{xy}$, $xz,yz \notin E(G_{xy} - N_{G_{xy}}[D'])$, so $N_G[D'] \cap \{x,y\} \neq \emptyset$.
    
    Suppose $d_G(x)=1$. By Definition~\ref{def-ABC}~(iv) and Observation~\ref{obs-ABC}~(i), $x \in R$, $y \in N_G(D')$, and $N_G(y)\setminus\{x\} \subseteq D'$. This contradicts $D'$ being an $\iota(G_{xy})$-set since $xz \in E(G_{xy}-N_{G_{xy}}[D'])$. A similar argument holds for $d_G(y)=1$. 
    
    Therefore, $d_G(x) \geq 2$ and $d_G(y) \geq 2$. 
    If $x,y \notin D'$, then $x',y' \in D'$ for some $x' \in N_G(x)\setminus{y}$ and some $y' \in N_G(y)\setminus\{x\}$. Thus, $x,y \in N_G(D')$. However $xy \in E(G)$, which contradicts $N_G(D')$ being an independent set of $G$. Therefore, $D' \cap \{x,y\} \neq \emptyset$. By Definition~\ref{def-ABC}~(iii), $D'$ is a $3$-packing in $G$. Thus, either $x \in D'$ or $y \in D'$. 
    Without loss of generality, suppose $x \in D'$. Since $D'$ is a $3$-packing in $G$, $D' \cap N_G[y'] = \emptyset$ for every $y' \in N_G(y)\setminus\{x\}$. By Observation~\ref{obs-ABC}~(ii), $E(G[N_G(y)])= \emptyset$. Thus, $(N_{G_{xy}}[y]\setminus\{z\}) \cap N_{G_{xy}}[D'] = \emptyset$. But $E(G_{xy}[N_{G_{xy}}[y]\setminus\{z\}]) \neq \emptyset$, contradicting $D'$ being an isolating set of $G_{xy}$.

    Therefore, $z \in D'$. Let $D_x = (D' \setminus \{z\}) \cup \{x\}$ and let $D_y = (D' \setminus \{z\}) \cup \{y\}$.  Since $N_{G_{xy}}(z) = \{x,y\} \subseteq N_G[x] \cap N_G[y]$ and $\iota(G_{xy}) = \iota(G)$, both $D_x$ and $D_y$ are $\iota(G)$-sets. If $x \in D'$, then $|D_x| = |D' \setminus \{z\}| < |D'|$, contradicting $\iota(G) = \iota(G_{xy})$. 
    A similar argument holds for $y \in D'$. Therefore, $x,y \notin D'$.
    Let $R_x = V(G)\setminus N_G[D_x]$. Thus, $(D_x, N_G(D_x),R_x)$ is a critical tripartition of $G$. 
    Note that $D'\setminus\{z\} \subseteq D_x$. By Definition~\ref{def-ABC}~(iii), $D_x$ is a $3$-packing in $G$. By Definition~\ref{def-ABC}~(iv) and Observation~\ref{obs-ABC}~(i), neither $x$ nor $y$ is a leaf or a support vertex of $G$.
    Let $y' \in N_G(x)\setminus\{y\}$. Note that $y,y' \in N_G(D_x)$ 
    and thus by Definition~\ref{def-ABC}~(ii), $N_G(y)\cap R_x \neq \emptyset \neq N_G(y') \cap R_x$. By Definition~\ref{def-ABC}~(i), $N_G(D_x)$ is independent, so $yy' \notin E(G)$. 
    Since $D_x$ is a $3$-packing in $G$, $D_x \cap N_G[y] = D_x \cap N_G[y'] = \{x\}$. 
    Thus, $N_G[(D'\setminus\{z\})] \cap N_G[y] = \emptyset = N_G[(D'\setminus\{z\})] \cap N_G[y']$. Since $N_{G_{xy}}[z] = \{x,z,y\}$ and $yy' \notin E(G)$, $V(G_{xy}-N_{G_{xy}}[D']) = R_x \cup \{y'\}$. Furthermore, since $N_{G_{xy}}(y') \cap R_x = N_G(y') \cap R_x \neq \emptyset$, $E(G_{xy}-N_{G_{xy}}[D']) = E(G_{xy}[R_x \cup \{y'\}]) \neq \emptyset$.
    This contradicts $D$ being an $\iota(G_{xy})$-set. Therefore, $G$ is $(\iota,1)$-critical.\qed{}

}
\setcounter{claim}{0}

\subsection{The case for trees} 

Now we provide a constructive characterization of $(\iota,1)$-critical trees. 
Let $\Fi$ be the family of trees such that $P_5 \in \Fi$ and any tree $T \in \Fi \setminus \{P_5\}$ can be constructed from $P_5$ by using operations $\mathcal{O}_1$, $\mathcal{O}_2$ and $\mathcal{O}_3$ defined and illustrated below.
For every $T \in \Fi$, we define the status of a vertex $v \in V(T)$, denoted by $\sta(v)$, to be $A, B$ or $C$ in the following manner. Initially, for $P_5$, let $\sta(v) = A$ if $v$ is neither a leaf of $P_5$ nor a support vertex of $P_5$, let $\sta(v) = B$ if $v$ is a support vertex of $P_5$ and let $\sta(v) = C$ if $v$ is a leaf of $P_5$. Once a vertex is assigned a status, this status remains unchanged as the tree is recursively constructed.

\begin{itemize}
    \item Operation $\mathcal{O}_1$: The tree $T$ is obtained from some $T' \in \Fi\setminus\{T\}$ of smaller order by adding a vertex $z$ and an edge $yz$, where $y$ is a vertex of $T'$ with status $B$. Let $\sta(z)=C$.
    
    \begin{figure}[htbp]
        \centering
        \begin{tikzpicture}[scale=1.0]
            \node[v,label={north:$A$}] (x) at (-1,0) {};
            \node[v,label={north:$B$}] (y) at (0,0) {};
            \node[label={south:$y$}] at (0,0) {};
            \node[v,label={north:$C$}] (z) at (1.25,0) {};
            \node[label={south:$z$}] at (1.25,0) {};
            \draw[e] (x) -- (y) -- (z);
            \draw[e] (y) to (0.5,-0.25);
            \draw[e] (x) to (-0.5,-0.25);
            \draw[rounded corners,densely dashed,thick] (-1.5, -0.6) rectangle (0.6, 0.6);
        \end{tikzpicture}
        \caption{Operation $\mathcal{O}_1$.}
        \label{fig:o1}
    \end{figure}
    
    \item Operation $\mathcal{O}_2$: The tree $T$ is obtained from some $T' \in \Fi\setminus\{T\}$ of smaller order by adding a path $P_2=(y,z)$ and an edge $xy$, where $x$ is a vertex of $T'$ with status $A$. Let $\sta(y)= B$ and $\sta(z)=C$.
    
    \begin{figure}[htbp]
        \centering
        \begin{tikzpicture}[scale=1.0]
            \node[v,label={north:$A$}] (x) at (0,0) {};
            \node[label={south:$x$}] at (0,0) {};
            \node[v,label={north:$B$}] (y) at (1.25,0) {};
            \node[label={south:$y$}] at (1.25,0) {};
            \node[v,label={north:$C$}] (z) at (2.25,0) {};
            \node[label={south:$z$}] at (2.25,0) {};
            \draw[e] (x) -- (y) -- (z);
            \draw[e] (x) to (0.5,-0.25);
            \draw[rounded corners,densely dashed,thick] (-0.5, -0.6) rectangle (0.6, 0.6);
        \end{tikzpicture}
        \caption{Operation $\mathcal{O}_2$.}
        \label{fig:o2}
    \end{figure}
    
    \item Operation $\mathcal{O}_3$: The tree $T$ is obtained from some $T' \in \Fi\setminus\{T\}$ of smaller order by adding a path $P_4=(w,x,y,z)$ and an edge $vw$, where $v$ is a vertex of $T'$ with status $C$. Let $\sta(w)= B$, $\sta(x)= A$, $\sta(y)= B$, and $\sta(z)=C$.
    \begin{figure}[htbp]
        \centering
        \begin{tikzpicture}[scale=1.0]
            \node[v,label={north:$A$}] (t) at (-1,0) {};
            \node[v,label={north:$B$}] (u) at (0,0) {};
            \node[v,label={north:$C$}] (v) at (1,0) {};
            \node[label={south:$v$}] at (1,0) {};
            \node[v,label={north:$B$}] (w) at (0,-1.25) {};
            \node[label={south:$w$}] at (0,-1.25) {};
            \node[v,label={north:$A$}] (x) at (-1,-1.875) {};
            \node[label={south:$x$}] at (-1,-1.875) {};
            \node[v,label={north:$B$}] (y) at (0,-2.5) {};
            \node[label={south:$y$}] at (0,-2.5) {};
            \node[v,label={north:$C$}] (z) at (1,-2.5) {};
            \node[label={south:$z$}] at (1,-2.5) {};
            \draw[e] (t) -- (u) -- (v) -- (w) -- (x) -- (y) -- (z);
            \draw[e] (t) to (-0.5,-0.25);
            \draw[rounded corners,densely dashed,thick] (-1.5, -0.6) rectangle (1.5, 0.6);
        \end{tikzpicture}
        \caption{Operation $\mathcal{O}_3$.}
        \label{fig:o3}
    \end{figure}

\end{itemize}

The following observations follow directly from the construction of the family $\mathcal{F}_\iota$. Denote by $\mathcal{A}_T, \mathcal{B}_T, \mathcal{C}_T$ the sets of vertices of $T$ with status $A$, $B$, and $C$, respectively.

\begin{obs}\label{obs-fiota}
    If $T \in \mathcal{F}_\iota$, then:
    \begin{enumerate}[label=(\roman*), align=left, leftmargin=*]
        \item every leaf is in $\mathcal{C}_T$;
        \item every support vertex is in $\mathcal{B}_T$;
        \item the set $\mathcal{A}_T$ is a unique isolating set which is a $3$-packing of $T$;
        \item  $\mathcal{A}_T\cup \mathcal{C}_T$ and $ \mathcal{B}_T$ are independent sets;
        \item the distance between any two leaves is even.
        \item $N_T(\mathcal{A}_T)=\mathcal{B}_T=N_T(\mathcal{C}_T)$.
    \end{enumerate}
\end{obs}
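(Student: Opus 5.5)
Every claimed property is an invariant of the recursive construction of $\Fi$, so I will prove Observation~\ref{obs-fiota} by induction on the number of operations used to build $T$. For the base case $T=P_5=(c_1,b_1,a,b_2,c_2)$ we have $\mathcal{A}_T=\{a\}$, $\mathcal{B}_T=\{b_1,b_2\}$ and $\mathcal{C}_T=\{c_1,c_2\}$. All six items are checked directly. In particular, $\{a\}$ is the only single vertex whose closed neighbourhood leaves no edge uncovered, since removing $N[b_1]$ leaves the edge $b_2c_2$. Hence $\iota(P_5)=1$ and $\mathcal{A}_T$ is the unique $\iota(P_5)$-set.

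For the inductive step, suppose $T$ is obtained from $T'\in\Fi$ by $\mathcal{O}_1$, $\mathcal{O}_2$ or $\mathcal{O}_3$, and that (i)--(vi) hold for $T'$.

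Items (i), (ii), (iv) and (vi) are local. Each new vertex receives a status matching its role: new leaves are $C$, and their neighbours are $B$. The only old vertex whose degree changes is the attachment vertex, which has status $B$, $A$ or $C$. In $\mathcal{O}_3$ the old leaf $v$ (status $C$) stops being a leaf, which is harmless. New edges always join a $B$-vertex to an $A$- or $C$-vertex, which preserves (iv). For (vi), each new $B$-vertex has both an $A$-neighbour and a $C$-neighbour: $y$ in $\mathcal{O}_1$ already had both, $y$ in $\mathcal{O}_2$ has $x$ and $z$, and in $\mathcal{O}_3$ the vertex $w$ has $x$ and $v$ while $y$ has $x$ and $z$. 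Item (v) follows from (i) and (iv): $T$ is bipartite with parts $\mathcal{A}_T\cup\mathcal{C}_T$ and $\mathcal{B}_T$, and every leaf lies in the first part, so distances between leaves are even.

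The main work is item (iii), which I read as follows: $\mathcal{A}_T$ is a $3$-packing and is the unique $\iota(T)$-set.

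\emph{Packing.} In $\mathcal{O}_1$ and $\mathcal{O}_2$ no vertex is added to $\mathcal{A}$, and distances between old vertices are unchanged in a tree. In $\mathcal{O}_3$ the new vertex $x$ satisfies $d(x,v)=2$. Since $v\in\mathcal{C}$ is not adjacent to any $A$-vertex, and every $B$-vertex neighbouring $v$ has an $A$-neighbour at distance $2$ from $v$, every old $A$-vertex is at distance at least $2$ from $v$, hence at distance at least $4$ from $x$.

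\emph{Isolating.} $\mathcal{A}_T$ is isolating because every edge has a $B$-endpoint by (iv), and $\mathcal{B}_T\subseteq N(\mathcal{A}_T)$ by (vi).

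\emph{Minimality.} By (vi), for each $a\in\mathcal{A}_T$ choose a path $(a,b,c)$ with $c\in\mathcal{C}_T$. Any vertex within distance $1$ of an edge of this path lies within distance $2$ of $a$. Because $\mathcal{A}_T$ is a $3$-packing, the balls of radius $2$ around distinct $A$-vertices can overlap only in vertices at distance exactly $2$ from both, and these vertices are in $\mathcal{C}_T$. I would refine the choice of paths, or use the edge $ab$ alone, to obtain $|\mathcal{A}_T|$ edges that must be handled by pairwise distinct vertices of any isolating set. I expect this to be the fiddliest point and would instead argue inductively: show $\iota(T)=\iota(T')$ for $\mathcal{O}_1$ and $\iota(T)=\iota(T')+1$ for $\mathcal{O}_3$, and handle $\mathcal{O}_2$ similarly.

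\emph{Uniqueness.} Let $D$ be an $\iota(T)$-set. I will show that $D$ restricts to an $\iota(T')$-set, so that $D\cap V(T')=\mathcal{A}_{T'}$ by induction, and then that the new part forces the remaining vertex. For $\mathcal{O}_3$, the edge $yz$ must be dominated, so $D$ meets $\{x,y,z,w\}$. Replacing that vertex by $x$ and pushing any vertex of $D$ lying at $v$ into $T'$ gives an isolating set of $T'$ of size $|D|-1$. Tightness of sizes then forces $x\in D$ and $D\setminus\{x\}=\mathcal{A}_{T'}$.

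The most delicate part of uniqueness is ruling out sets that use the attachment vertex ($y$, $x$ or $v$) in place of an $A$-vertex. I would handle this by noting that such a swap leaves some edge of the form $(b,c)$ in $T'$ undominated, using item (vi) of the induction hypothesis.
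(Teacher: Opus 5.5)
\textbf{Verdict: correct, with a different route for (iii).} For items (i), (ii), (iv), (v) and (vi) your argument matches the paper's. The paper only says these follow directly from the construction. Your checks are exactly what that sentence leaves implicit: new edges always join a $B$-vertex to an $A$- or $C$-vertex, every $B$-vertex keeps an $A$- and a $C$-neighbour, and bipartiteness gives (v).

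For (iii) the paper gives no argument of its own. It observes that $\Fi$ is the subfamily, generated from $P_5$ by the same operations $\mathcal{O}_1,\mathcal{O}_2,\mathcal{O}_3$, of the family of trees with a unique minimum vertex-edge dominating set, and imports (iii) from Theorem~9 of that reference. You instead prove (iii) by induction along the construction. This makes the observation self-contained, and it yields Proposition~\ref{prop-unique} and the recurrences $\iota(T)=\iota(T')$ under $\mathcal{O}_1,\mathcal{O}_2$ and $\iota(T)=\iota(T')+1$ under $\mathcal{O}_3$ as by-products. The cost is a case analysis per operation. The citation is shorter, but it relies on the external theorem to identify the status-$A$ vertices with the unique minimum set.

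Your sketch is sound, but several points need tightening when you write it out.

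\emph{Abandoned packing remark.} The sentence ``any vertex within distance $1$ of an edge of this path lies within distance $2$ of $a$'' is false. A $B$-neighbour $b'\neq b$ of $c$ is within distance $1$ of the edge $bc$ but at distance $3$ from $a$. You are right to rely on the induction instead.

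\emph{Operation $\mathcal{O}_3$.} The edge $yz$ forces $D\cap\{x,y,z\}\neq\emptyset$; $w$ is too far from $yz$ to count. Size-tightness alone does not give $x\in D$. The set $\mathcal{A}_{T'}\cup\{y\}$ has the right size and the right restriction to $T'$. It is excluded only because $vw$ is then undominated, since $v\in\mathcal{C}_{T'}$ lies outside $N[\mathcal{A}_{T'}]$. If $w\in D$, then $D$ contains two new vertices. Then $(D\cap V(T'))\cup\{v\}$ is an isolating set of $T'$ of size at most $\iota(T')$ containing $v\notin\mathcal{A}_{T'}$, contradicting the induction hypothesis.

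\emph{Operation $\mathcal{O}_2$.} Suppose $D$ meets $\{y,z\}$. Then $(D\setminus\{y,z\})\cup\{x\}$ is an isolating set of $T'$ of size at most $\iota(T')$, so it equals $\mathcal{A}_{T'}$. Hence the only candidates to exclude are $(\mathcal{A}_{T'}\setminus\{x\})\cup\{y\}$ and $(\mathcal{A}_{T'}\setminus\{x\})\cup\{z\}$. Excluding them needs the $3$-packing property as well as (vi). A neighbour $b$ of $x$ in $T'$ has $x$ as its only $A$-neighbour, so any edge $bc$ with $c\in\mathcal{C}_{T'}$ is left undominated.
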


Note that $\mathcal{F}_\iota$ is a subfamily of the family of trees which have a unique smallest vertex-edge dominating set characterized in~\cite{sentUVED}, with the first graph $P_5$ and $\mathcal{O}_1$, $\mathcal{O}_2$, and $\mathcal{O}_3$ being the same for both families of trees. Thus, Observation~\ref{obs-fiota}~(iii) is a consequence of Theorem~9 in~\cite{sentUVED}. 

Given this condition, we now show that a graph belongs to $\Fi$ if and only if it is an $(\iota,1)$-critical tree. This is motivated by the additional condition regarding the uniqueness of a minimum isolating set, which is not present in the characterization provided by Theorem~\ref{thm-ABC}. The smallest example of a non-tree graph which demonstrates this is $C_4$, which is $(\iota,1)$-critical by Observation~\ref{obs-cycles}, and we note that every vertex of $C_4$ is an isolating vertex of $C_4$. Prior to proving the necessary and sufficient conditions for the equivalence, we give the following observation for use as a base case and a proposition which is a corollary of Theorem~9 in~\cite{sentUVED}.

\begin{obs}
    If $T$ is an $(\iota,1)$-critical tree with $|V(T)| \leq 5$, then $T \simeq P_5$.
    \label{obs:P5}
\end{obs}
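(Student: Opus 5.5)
The plan is to enumerate all trees on at most $5$ vertices and test each against Theorem~\ref{thm-ABC}. A tree on at most $5$ vertices is one of $K_1$, $P_2$, $P_3$, $P_4$, $K_{1,3}$, $P_5$, $K_{1,4}$, or the spider $S_{3,1}$ (the tree obtained from $K_{1,3}$ by subdividing one edge). Stars are excluded at once, since subdividing an edge of a star does not increase its isolation number. Concretely, the central vertex of $K_{1,k}$, or the single vertex of $K_1$, is still isolating after subdividing any one edge. That leaves $P_4$, $S_{3,1}$ and $P_5$.

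For $P_4$, Observation~\ref{obs-paths} together with the path formula gives $\sdi(P_4)=2$, so $P_4$ is not $(\iota,1)$-critical. The same conclusion follows directly: after one subdivision we get $P_5$, and $\iota(P_5)=1=\iota(P_4)$.

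For $S_{3,1}$, label the central vertex $x$, its two leaf neighbours $v_1,v_2$, and the remaining branch $x u v_3$. Then $\{x\}$ is an $\iota$-set. Subdividing $xv_1$ leaves $\{x\}$ isolating, since only the leaf $v_1$ lies outside the closed neighbourhood of $x$, and it is isolated. Hence $\iota$ does not increase, and $S_{3,1}$ is not critical. This also agrees with Proposition~\ref{prop-woundedspider} with $d=2$.

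For $P_5$, Observation~\ref{obs-paths} gives $\sdi(P_5)=1$ since $5\equiv 1 \pmod 4$, so $P_5$ is $(\iota,1)$-critical. This step is only a consistency check: the observation as stated needs just the "only if" direction.

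There is no real obstacle here. The only care needed is to make sure the list of trees on at most $5$ vertices is complete: there are $1,1,1,2,3$ trees on $1,\dots,5$ vertices respectively, so nine trees in total with $K_{1,2}=P_3$ counted once. Each of these must be matched to one of the cases above.

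Alternatively, one could argue uniformly through Theorem~\ref{thm-ABC} and Observation~\ref{obs-ABC}. If $T$ is $(\iota,1)$-critical and $D$ is an $\iota(T)$-set, then each vertex of $D$ has at least two neighbours in $B$ and each of those has a neighbour in $C$. This already forces at least $5$ vertices, and equality means exactly $P_5$. I would include this counting remark as the cleaner argument and keep the case check as backup.
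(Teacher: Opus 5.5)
Your proof is correct. The paper states this observation without proof, so your case check is essentially the intended verification, and each case is handled correctly:
\begin{itemize}
\item every $k$-star with $k\ge 1$ (including $P_2$ and $P_3$) keeps $\iota=1$, because its centre still isolates after one subdivision;
\item $P_4$ becomes $P_5$, which has $\iota=1$;
\item subdividing $xv_1$ in $S_{3,1}$ leaves $\{x\}$ isolating;
\item $P_5$ becomes $P_6$, which has $\iota=2$.
\end{itemize}

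Three small fixes are needed.
\begin{itemize}
\item The count is off: $1+1+1+2+3=8$, not nine. Your list of eight trees is already complete, so there is no missing case.
\item Your dismissal of $K_1$ does not work. $K_1$ has no edges, so the requirement that every single-edge subdivision raises $\iota$ holds vacuously. Under the paper's definition, $K_1$ is literally $(\iota,1)$-critical. It is excluded only by the paper's standing convention of ignoring stars, and you should invoke that convention rather than argue it away. The same convention is silently needed in Theorem~\ref{thm-ABC}, since $K_1$ has no critical tripartition.
\item In your counting argument, getting five vertices requires the $C$-neighbours $c_1$ of $b_1$ and $c_2$ of $b_2$ to be distinct. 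This is exactly where acyclicity is used, since $c_1=c_2$ would create the $4$-cycle $xb_1c_1b_2$. The step genuinely needs it: $C_4$ is $(\iota,1)$-critical on only four vertices.
\end{itemize}

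The two routes trade off differently. The enumeration is elementary, using only the path formula and direct checks, but it rests on knowing the complete list of trees of order at most $5$. The argument through Theorem~\ref{thm-ABC} needs no enumeration and proves more. In any $(\iota,1)$-critical tree with $\iota(T)$-set $D$, every $x\in D$ is the centre of a path $c_1b_1xb_2c_2$ with $b_i\in N(D)$ and $c_i\in V(T)\setminus N[D]$. Hence such trees have at least $5$ vertices, and $P_5$ is the only one of order $5$. The cost is reliance on the full forward direction of Theorem~\ref{thm-ABC}. That theorem is already available at this point in the paper, so this version is the cleaner one to present.
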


\begin{prop}\cite{sentUVED}\label{prop-unique}
    If $T \in \mathcal{F}_\iota$, then $T$ has a unique $\iota(T)$-set.
\end{prop}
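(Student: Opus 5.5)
We argue by induction on the construction of $T \in \Fi$, carrying along the invariants of Observation~\ref{obs-fiota}. Since the statement is attributed to Theorem~9 of~\cite{sentUVED} and the three operations coincide with those there, one could simply cite it. A self-contained plan is as follows.

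For the base case $P_5=(c_1,b_1,a,b_2,c_2)$, the centre $\{a\}$ is an isolating set. It is the only isolating vertex, because any other single vertex leaves one of the edges $c_1b_1$ or $b_2c_2$ undominated. For the induction step, suppose $T$ is obtained from $T'\in\Fi$ by one of $\mathcal{O}_1,\mathcal{O}_2,\mathcal{O}_3$, and that $\mathcal{A}_{T'}$ is the unique $\iota(T')$-set. We first show that $\mathcal{A}_T$ is an isolating set of $T$ with $|\mathcal{A}_T|=\iota(T)$. That it is isolating follows from Observation~\ref{obs-fiota}(iv),(vi), since every edge has an endpoint in $\mathcal{B}_T=N(\mathcal{A}_T)$. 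For the size bound, under $\mathcal{O}_1$ and $\mathcal{O}_2$ we get $\iota(T)\ge\iota(T')$: the new leaf $z$ is adjacent only to a $B$-vertex, and any isolating set of $T$ can be projected onto $T'$ by replacing $z$ with $y$ and, under $\mathcal{O}_2$, replacing $y$ with $x$. Since $|\mathcal{A}_T|=|\mathcal{A}_{T'}|$ in these cases, equality holds. Under $\mathcal{O}_3$, the edge $yz$ must be isolated, so some vertex of $\{x,y,z\}$ lies in every isolating set $D$ of $T$. Removing it and, if necessary, replacing $w$ with $v$ yields an isolating set of $T'$ of size at most $|D|-1$. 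Hence $\iota(T)=\iota(T')+1=|\mathcal{A}_T|$.

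For uniqueness, let $D$ be any $\iota(T)$-set and show that $D=\mathcal{A}_T$. Under $\mathcal{O}_1$, we have $z\notin D$, since otherwise $D-z+y$ is an isolating set; this would contradict uniqueness once we note that $D\cap V(T')$, after this swap, is an $\iota(T')$-set containing the $B$-vertex $y$. More precisely, we show that $D\cap V(T')$, with $z$ replaced by $y$ if necessary, is an $\iota(T')$-set. By the induction hypothesis it equals $\mathcal{A}_{T'}$, which contains no $B$-vertex, so the replacement cannot have occurred. Under $\mathcal{O}_2$, a similar argument applies: the edge $yz$ forces $D\cap\{x,y,z\}\neq\emptyset$, and the projection onto $T'$ must equal $\mathcal{A}_{T'}\ni x$. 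This projection is obtained by replacing $y$ or $z$ with $x$, and this replacement would give a strictly smaller set unless $x\notin D$. We then need a minimality argument showing that $x\in D$ exactly. Under $\mathcal{O}_3$, $D$ contains exactly one vertex of $\{w,x,y,z\}$. If that vertex is not $x$, then either some edge among $vw$, $wx$, $xy$, $yz$ is left undominated, or the projection onto $T'$ produces an $\iota(T')$-set different from $\mathcal{A}_{T'}$. For the latter, we use the fact that $v\in\mathcal{C}_{T'}$ and that $\mathcal{A}_{T'}$ contains no vertex of $\mathcal{C}_{T'}$.

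The main obstacle is the $\mathcal{O}_3$ uniqueness case with $w\in D$. There, $D\setminus\{w\}$ together with $v$ may isolate $T'$ while $v\notin\mathcal{A}_{T'}$, and we must rule out that $D\setminus\{w\}$ alone is isolating for $T'$. We handle this using Observation~\ref{obs-fiota}(vi): $v$ has a $B$-neighbour $u$ whose edge to $v$ must be covered in $T'$. If $w\in D$, then the edge $yz$ is not isolated unless $y$ or $z$ is also in $D$, which contradicts the count of one vertex in $\{w,x,y,z\}$. So in fact $w\notin D$ directly, and the same reasoning eliminates $y$ and $z$ as well, because choosing either leaves $vw$ or $wx$ uncovered, since the $3$-packing property keeps $\mathcal{A}_{T'}$ far from $w$.
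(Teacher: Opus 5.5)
The paper does not prove this itself; it imports Theorem~9 of \cite{sentUVED}. A direct induction over $\mathcal{O}_1,\mathcal{O}_2,\mathcal{O}_3$ is therefore a legitimate alternative. Your base case, your proof that $\iota(T)=|\mathcal{A}_T|$ in all three cases, and your $\mathcal{O}_1$ uniqueness step are all sound.

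\textbf{Gap in the $\mathcal{O}_2$ case.} Your uniqueness step stops exactly where the real work is. Projecting $D$ onto $T'$ gives only $|D\cap\{x,y,z\}|=1$ and projection $=\mathcal{A}_{T'}$. Both facts are consistent with $D=(\mathcal{A}_{T'}\setminus\{x\})\cup\{y\}$ or $D=(\mathcal{A}_{T'}\setminus\{x\})\cup\{z\}$, since the projection of either set is $\mathcal{A}_{T'}$. The induction hypothesis applied to the projection cannot exclude these sets, and the promised ``minimality argument'' is never supplied. One way to close this is as follows. Take $b\in N_{T'}(x)$. Then
$N_T[D]\cap V(T')\subseteq N_{T'}[\mathcal{A}_{T'}\setminus\{x\}]\cup\{x\}\subseteq N_{T'}[(\mathcal{A}_{T'}\setminus\{x\})\cup\{b\}]$.
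Hence $(\mathcal{A}_{T'}\setminus\{x\})\cup\{b\}$ is an isolating set of $T'$ of size at most $\iota(T')$ that does not contain $x$. It is thus an $\iota(T')$-set different from $\mathcal{A}_{T'}$, contradicting the induction hypothesis.

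\textbf{Gap in the $\mathcal{O}_3$ case.} You assert that $D$ meets $\{w,x,y,z\}$ in exactly one vertex, but this is never justified. Your counting (delete $D\cap\{x,y,z\}$, replace $w$ by $v$) yields only $|D\cap\{x,y,z\}|=1$. It is fully compatible with $D$ containing $w$ together with one of $x,y,z$. Your last paragraph then excludes $w$ by appealing to that same count, which is circular. The question of whether $D\setminus\{w\}$ alone isolates $T'$ is beside the point.

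The step you need is the one you only allude to. If $w\in D$, then $(D\cap V(T'))\cup\{v\}$ is an isolating set of $T'$ of size at most $|D|-1=\iota(T')$. By induction it equals $\mathcal{A}_{T'}$, which is impossible since $v\in\mathcal{C}_{T'}$. So $w\notin D$ and $D\cap V(T')=\mathcal{A}_{T'}$. Now suppose $D$ contains $y$ or $z$ instead of $x$. By Observation~\ref{obs-fiota}(vi), $N[\mathcal{A}_{T'}]=\mathcal{A}_{T'}\cup\mathcal{B}_{T'}$, so $v\notin N_{T'}[\mathcal{A}_{T'}]$. Also $w,x\notin D$. Hence both ends of $vw$ lie outside $N_T[D]$, so $D$ is not isolating. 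This is a cleaner reason than your loose appeal to the $3$-packing property.

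With these two repairs your induction is a complete direct proof of the one direction the paper uses. It then no longer depends on the full unique-set characterization in \cite{sentUVED}.
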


\begin{thm}\label{thm-i1cons}
    Let $T$ be a tree. If $T$ is $(\iota,1)$-critical, then $T \in \mathcal{F}_\iota$.
\end{thm}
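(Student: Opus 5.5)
We argue by induction on $n=|V(T)|$, using Theorem~\ref{thm-ABC} as the main tool. The base case is Observation~\ref{obs:P5}: an $(\iota,1)$-critical tree on at most five vertices is $P_5\in\Fi$. For the induction step, let $T$ be $(\iota,1)$-critical with $n\ge 6$ and fix an $\iota(T)$-set $D$. By Theorem~\ref{thm-ABC}, $(D,B,C)$ with $B=N(D)$ and $C=V(T)\setminus N[D]$ is a critical tripartition. Observation~\ref{obs-ABC} then tells us that every leaf lies in $C$ and every support vertex lies in $B$. The plan is to delete a small piece of $T$ in the reverse of one of $\mathcal{O}_1,\mathcal{O}_2,\mathcal{O}_3$, obtaining a tree $T'$. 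We will show that $T'$ is again $(\iota,1)$-critical and that the statuses induced by the tripartition of $T'$ match those in $\Fi$. Induction then gives $T'\in\Fi$, and hence $T\in\Fi$.

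The reduction is driven by a diametral path $(v_0,v_1,\dots,v_d)$. Here $v_0\in C$ is a leaf and $v_1\in B$, so $v_2\in D$ (since $v_1$ needs a neighbour in $D$ and all other neighbours of $v_1$ are leaves in $C$). We then split into three cases.

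\emph{Case 1: $v_1$ has another leaf neighbour, or more generally some $y\in B$ has at least two neighbours in $C$, one of them a leaf $z$.} Set $T'=T-z$. Then $(D,B,C\setminus\{z\})$ is still a critical tripartition of $T'$, because $y$ keeps a neighbour in $C$. This is the reverse of $\mathcal{O}_1$.

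\emph{Case 2: $d_T(v_1)=2$ and $v_2$ has another neighbour in $B$.} Set $T'=T-\{v_0,v_1\}$. Then $v_2$ keeps a $B$-neighbour, so it is not a leaf, and the tripartition restricts to $T'$. This is the reverse of $\mathcal{O}_2$.

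\emph{Case 3: otherwise $v_2$ is a vertex of degree $2$ in $D$.} Then $v_3\in B$. We show that $v_3$ has degree $2$ (using the maximality of the diametral path and the preceding cases), so $v_4\in C$. Removing $\{v_0,v_1,v_2,v_3\}$ is the reverse of $\mathcal{O}_3$ attached at $v_4$. We must also check that $v_4$ retains a $B$-neighbour and that the restricted sets still satisfy Definition~\ref{def-ABC}. The $3$-packing property and condition (ii) survive deletion of a pendant piece, provided no vertex of $B$ loses all its $C$-neighbours or all its $D$-neighbours.

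The main obstacle is the step ``$T'$ is $(\iota,1)$-critical''. Theorem~\ref{thm-ABC} requires the tripartition condition for \emph{every} $\iota(T')$-set, not just the restriction of $D$. So we must show two things: $\iota(T')=\iota(T)-\epsilon$ (with $\epsilon=0$ for the reverses of $\mathcal{O}_1,\mathcal{O}_2$ up to the removed $A$-vertex, and $\epsilon=1$ for $\mathcal{O}_3$), and every $\iota(T')$-set $D'$ extends to an $\iota(T)$-set, namely $D'$ or $D'\cup\{v_2\}$. Once an extension is an $\iota(T)$-set, Theorem~\ref{thm-ABC} applied in $T$ gives it a critical tripartition, and restricting yields one for $D'$ in $T'$. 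I expect the $\mathcal{O}_3$ case to need care: $D'$ might use a vertex near $v_4$ in a way that conflicts with $v_2$ being at distance $\ge 4$. We would handle this by replacing $D'$ with a set avoiding $N[v_4]$, using $\iota$-swapping arguments as in the proof of Theorem~\ref{thm-ABC}. Finally, since Proposition~\ref{prop-unique} gives a unique $\iota(T')$-set, the statuses of $T'$ in $\Fi$ agree with $(D\cap V(T'),\dots)$. This confirms that the attachment vertex has the required status $A$, $B$ or $C$.
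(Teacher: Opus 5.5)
Your route is the paper's: induction on $n$ along a diametral path $(v_0,\dots,v_d)$, removing a leaf at $v_1$, at a neighbour of $v_2$ or at $v_3$ (reverse $\mathcal{O}_1$), a pendant $P_2$ at $v_2$ (reverse $\mathcal{O}_2$), or $\{v_0,\dots,v_3\}$ (reverse $\mathcal{O}_3$ at $v_4$), with Theorem~\ref{thm-ABC} transferring criticality to $T'$. Reading off the status of the attachment vertex via Proposition~\ref{prop-unique} is cleaner than the paper's branch analysis at $v_4$. You are also right that Theorem~\ref{thm-ABC} must be verified for \emph{every} $\iota(T')$-set, which the paper does only implicitly. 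But that is exactly the step you leave unproved, and you place the difficulty in the wrong case.

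The reverse-$\mathcal{O}_3$ case needs no care. All edges of $T$ not in $T'$ lie inside $N_T[v_2]$, so $D'\cup\{v_2\}$ isolates $T$ for every $\iota(T')$-set $D'$. Since $\iota(T')=\iota(T)-1$, it is an $\iota(T)$-set, and Theorem~\ref{thm-ABC} in $T$ already makes it a $3$-packing, so $D'\cap N_T[v_4]=\emptyset$ for free. (Replacing $D'$ by a nicer set, as you suggest, would not be legitimate anyway: the tripartition condition has to hold for each $\iota(T')$-set.)

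The real work is in the cases with $\epsilon=0$, where $D'$ itself must isolate $T$. This means $v_2\in D'$ for the reverse $\mathcal{O}_2$, and $v_3\in N[D']$ for the leaf removed at $v_3$, whose other $C$-neighbour $v_4$ is not a leaf. A priori $D'$ might cover the pendant edge $yc$ on another branch at $v_2$ (with $N_T(y)=\{v_2,c\}$), resp. $v_0v_1$, by $y$ or $c$, resp. $v_0$ or $v_1$. The missing argument is a swap:
\begin{enumerate}
\item replace that vertex by $v_2$;
\item the result isolates $T$ and has size at most $|D'|\le\iota(T)$ (as $D$ isolates $T'$), so it is an $\iota(T)$-set and hence a $3$-packing;
\item so no other vertex of $D'$ lies within distance $3$ of $v_2$, and $D'$ leaves the edge $v_3v_4$ of $T'$ uncovered, a contradiction.
\end{enumerate}
Your Case 1 is immediate only when the remaining $C$-neighbours of $y$ are leaves ($y=v_1$, or $y$ a neighbour of $v_2$). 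The ``any $y\in B$'' version is not justified as written.

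Two smaller points. Case 2 as stated is vacuous, because every neighbour of $v_2\in D$ lies in $B$. You need $d_T(v_2)\ge 3$; otherwise $v_2$ becomes a leaf of $T'$ lying in $D$. Also, $d_T(v_3)=2$ does not follow from the earlier cases alone. A branch at $v_3$ with a leaf at distance $2$ from $v_3$ is excluded by parity (Observation~\ref{obs-ABC}(iii)). A branch reaching distance $3$ forces a vertex of $D$ adjacent to $v_3$, hence at distance $2$ from $v_2$, contradicting the $3$-packing property.
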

\proof{
    Let $T$ be an $n$-vertex $(\iota,1)$-critical tree and let $D$ be an $\iota(T)$-set. By Theorem~\ref{thm-ABC}, $(D,N_T(D),V(T)\setminus N_T[D])$ is a critical tripartition of $T$. We show that if $T$ is $(\iota,1)$-critical, then it can be constructed from $P_5$ using a sequence of operations $\mathcal{O}_1$, $\mathcal{O}_2$, or $\mathcal{O}_3$. We prove this using induction on $n$.

    Let $\diam(T) = d$. By Observation~\ref{obs:P5}, $n \geq 5$.
    Since $P_5\in \mathcal{F}_{\iota}$, we assume that $n \geq 6$.
    Let $P=(v_0,v_1,\dots ,v_d)$ be a diametral path of $T$. Observe that if $d\leq 3$, then $T$ is not $(\iota,1)$-critical. Hence, $d\geq 4$. Since $T$ is a tree, $d_T(v_0) = d_T(v_d) = 1$. By Observation~\ref{obs-ABC}~(i), $v_0,v_d \in V(T) \setminus N_T[D]$.

    Suppose $d_T(v_1) \geq 3$. Then there exists $x \in N_T(v_1)\setminus \{v_0,v_2\}$ such that $d_T(x) = 1$.
    Let $T' = T-\{x\}$. Note that $D$ is an isolating set of $T'$. Suppose for contradiction that there exists an $\iota(T')$-set $D'$ with $|D'| < |D|$. Since $v_1$ is a support vertex of $T'$ and $D'$ is a minimum isolating set of $T'$ whose closed neighbourhood intersects $v_0 v_1$, $|N_{T'}[v_1] \cap D'| = 1$. Thus, without loss of generality, suppose $v_2 \in D'$. Hence, $D'$ is an isolating set of $T$. This gives $\iota(T) \leq |D'| < |D|$, a contradiction. Thus, $\iota(T') = \iota(T)$ and $(D,N_{T'}(D),V(T')\setminus N_{T'}[D])$ is a critical tripartition of $T'$ for every $\iota(T)$-set $D$. By Theorem~\ref{thm-ABC}, $T'$ is $(\iota,1)$-critical. Thus, by the induction hypothesis, $T' \in \Fi$. By Observation~\ref{obs-fiota}~(ii), $v_1$ has status $B$, and $T$ can be obtained by applying $\mathcal{O}_1$ to $T'$ on $v_1$. Therefore, $T \in \Fi$.
    
    Now suppose $d_T(v_1) = 2$. Suppose $d_T(v_2) \geq 3$. By Observation~\ref{obs-ABC}~(iii) and (iv), $v_2$ is not a support vertex of $T$, so there exists a leaf $x \in N_T(y)$ for some $y\in N_T(v_2)\setminus \{v_1,v_3\}$. If $d_T(y) \geq 3$, then the case follows a similar argument to the case for $d_T(v_1) \geq 3$, so suppose $d_T(y) = 2$. Let $T' = T - \{x,y\}$. Note that $D$ is an isolating set of $T'$. Using similar arguments as in the previous case we can deduce that $D$ is an $\iota(T')$-set. Hence, $\iota(T) = \iota(T')$ and $(D,N_{T'}(D),V(T')\setminus N_{T'}[D])$ is a critical tripartition of $T'$ for every $\iota(T)$-set $D$. By Theorem~\ref{thm-ABC}, $T'$ is $(\iota,1)$-critical. Thus, by the induction hypothesis, $T' \in \Fi$. Since $v_2$ is neither a leaf nor a support vertex of $T'$, then by Observation~\ref{obs-fiota}~(i) and (ii), $v_2$ has status $A$, and $T$ can be obtained by applying $\mathcal{O}_2$ to $T'$ on $v_2$. Therefore, $T \in \Fi$.

    Hence, $d_T(v_2) = 2$. Since $v_2$ is not a leaf or a support vertex, then by Observation~\ref{obs-ABC}~(i) and (iv), $v_2 \in D$ for every $\iota(T)$-set $D$. Thus, $v_3 \in N_T(D)$ and $N_T(v_3)\setminus \{v_2\} \subseteq V(T) \setminus N_T[D]$ for every $\iota(T)$-set $D$. Suppose $d_T(v_3) \geq 3$. 
    Thus, for some $i \in [3]$ there exists a path on vertices $x_1,x_2,\ldots,x_i \in V(T)\setminus V(P)$ such that $x_jx_{j+1} \in E(T)$ for $j \in [i-1]$ and $v_3x_1 \in E(T)$. If $i=3$, then by Observation~\ref{obs-ABC}~(i) and~(iv), $x_1\in D$. Thus, $d_T(x_1,v_2) = 2$, contradicting Definition~\ref{def-ABC}~(iii). If $i=2$, then $d_T(v_0,x_2) = 5$, contradicting Observation~\ref{obs-ABC}~(iii). Thus, $i=1$. Let $T'=T-\{x_1\}$. Recall that $v_2 \in D$ and observe that $D$ is an isolating set of $T'$.
    Using similar arguments as in the case for $d_T(v_1) \geq 3$ we can deduce that $D$ is an $\iota(T')$-set. Hence, $\iota(T) = \iota(T')$ and $(D,N_{T'}(D),V(T')\setminus N_{T'}[D])$ is a critical tripartition of $T'$ for every $\iota(T)$-set $D$. By Theorem~\ref{thm-ABC}, $T'$ is $(\iota,1)$-critical. Thus, by the induction hypothesis, $T' \in \Fi$. By Observation~\ref{obs-fiota}~(ii), $v_3$ has status $B$, and $T$ can be obtained by applying $\mathcal{O}_1$ to $T'$ on $v_3$. Therefore, $T \in \Fi$.

    Hence, $d_T(v_3) = 2$. Since $P$ is a diametral path with $d_T(v_d) = 1$, then by symmetry we have $d(v_{d-i}) = 2$ for every $i \in [3]$. Since $n \geq 6$ and $d(v_0,v_d)$ is even by Observation~\ref{obs-ABC}~(iii), $d \geq 8$. Let $T' = T - \{v_0,v_1,v_2,v_3\}$. We claim that $\iota(T') = \iota(T) - 1$. Observe that $D\setminus\{v_2\}$ is an isolating set of $T'$. Suppose for contradiction that there exists an $\iota(T')$-set $|D'|$ with $|D'| < |D\setminus \{v_2\}|$. In this case, $D'\cup \{v_2\}$ would be an isolating set of $T$ such that $|D'\cup \{v_2\}|< \iota(T)$, a contradiction. Hence, $\iota(T) = \iota(T')+1$ and $(D\setminus \{v_2\},N_{T'}(D\setminus \{v_2\}),V(T')\setminus N_{T'}[D\setminus \{v_2\}])$ is a critical tripartition of $T'$ for every $\iota(T)$-set $D$. By Theorem~\ref{thm-ABC}, $T'$ is $(\iota,1)$-critical. Thus, by the induction hypothesis, $T' \in \Fi$. If $d_{T'}(v_4) = 1$, then $v_4$ has status $C$. Suppose $d_{T'}(v_4) \geq 2$. Thus, for some $i \in [4]$ there exists a path on vertices $x_1,x_2,\ldots,x_i \in V(T')\setminus V(P)$ such that $x_jx_{j+1} \in E(T')$ for $j \in [i-1]$ and $v_4x_1 \in E(T')$. If $i \in \{1,3\}$, then $d(v_0,x_i)$ is odd, contradicting Observation~\ref{obs-ABC}~(iii). If $i = 2$, then $v_4 \in D$, contradicting Definition~\ref{def-ABC}~(iii). Thus, $i = 4$ and $v_4$ has status $C$. Hence, for both the case $d_{T'}(v_4) = 1$ and the case $d_{T'}(v_4) \geq 2$ with $i = 4$, $T$ can be obtained by applying $\mathcal{O}_3$ to $T'$ on $v_4$. Therefore, $T \in \Fi$.\qed{}

}

\begin{wn}
    If $T$ is an $(\iota,1)$-critical tree, then $T$ has exactly one $\iota(T)$-set.
\end{wn}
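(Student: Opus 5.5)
This corollary follows by chaining the two preceding results. Let $T$ be an $(\iota,1)$-critical tree. By Theorem~\ref{thm-i1cons}, $T \in \Fi$. By Proposition~\ref{prop-unique}, every member of $\Fi$ has a unique $\iota(T)$-set. Hence $T$ has exactly one $\iota(T)$-set, and by Observation~\ref{obs-fiota}~(iii) this set is the set $\mathcal{A}_T$ of vertices of status $A$.

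The only point needing care is that ``exactly one'' also requires existence. This is immediate, because every finite graph has a minimum isolating set. There is therefore no real obstacle: all the substance lies in Theorem~\ref{thm-i1cons} and in the uniqueness result of~\cite{sentUVED}.

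If a self-contained argument avoiding~\cite{sentUVED} were preferred, I would instead combine Theorem~\ref{thm-ABC} with Observation~\ref{obs-fiota}. Every $\iota(T)$-set $D$ yields a critical tripartition, so in particular $D$ avoids all leaves and support vertices, by Definition~\ref{def-ABC}~(iv) and Observation~\ref{obs-ABC}~(i). I would then prove $D=\mathcal{A}_T$ by induction along the construction sequence of $T$.
\begin{itemize}
\item[$\mathcal{O}_1$:] The new leaf forces nothing new.
\item[$\mathcal{O}_2$:] The attachment vertex lies in $\mathcal{A}_T$.
\item[$\mathcal{O}_3$:] The pendant path $(w,x,y,z)$ forces $x \in D$, since $x$ is neither a leaf nor a support vertex and the edge $yz$ must be isolated. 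This reduces to $T'$ with the $\iota$ value dropping by one.
\end{itemize}
The delicate step in this alternative is showing that $D \setminus \{x\}$ is an $\iota(T')$-set, which mirrors the last case of the proof of Theorem~\ref{thm-i1cons}. Citing Proposition~\ref{prop-unique} avoids this step.
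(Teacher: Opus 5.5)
Your argument is correct and is the paper's proof: Theorem~\ref{thm-i1cons} places $T$ in $\Fi$, and Proposition~\ref{prop-unique} then gives a unique $\iota(T)$-set. The self-contained alternative is not needed; if you pursued it, note that invoking Theorem~\ref{thm-ABC} at each inductive step requires knowing that each intermediate tree $T'$ is itself $(\iota,1)$-critical (or else a direct exchange argument valid for all members of $\Fi$), and your sketch does not yet address this.
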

\proof{
    This follows from Theorem~\ref{thm-i1cons} and Proposition~\ref{prop-unique}.\qed{}
}

\begin{thm}\label{thm-fiota}
    A tree $T$ is $(\iota,1)$-critical if and only if $T \in \Fi$.
\end{thm}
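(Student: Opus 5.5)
One direction is already done: if $T$ is $(\iota,1)$-critical, then $T\in\Fi$ by Theorem~\ref{thm-i1cons}. So only the converse remains: every $T\in\Fi$ is $(\iota,1)$-critical. For this I will use the characterization in Theorem~\ref{thm-ABC}. It suffices to show that for every $\iota(T)$-set $D$, the triple $(D,N_T(D),V(T)\setminus N_T[D])$ is a critical tripartition of $T$.

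\emph{Reduction to one set.} By Proposition~\ref{prop-unique}, $T$ has a unique $\iota(T)$-set. By Observation~\ref{obs-fiota}~(iii), this set is $\mathcal{A}_T$. Hence it is enough to check the single triple $(\mathcal{A}_T,\mathcal{B}_T,\mathcal{C}_T)$. To do so I will first identify $N_T(\mathcal{A}_T)=\mathcal{B}_T$ and $V(T)\setminus N_T[\mathcal{A}_T]=\mathcal{C}_T$. The first equality is Observation~\ref{obs-fiota}~(vi). The second then follows because the statuses partition $V(T)$.

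\emph{Verifying the conditions of Definition~\ref{def-ABC}.}
\begin{enumerate}[label=(\roman*), align=left, leftmargin=*]
\item Nonemptiness of the three sets holds already in $P_5$ and is preserved by the operations. Independence of $\mathcal{A}_T\cup\mathcal{C}_T$ and of $\mathcal{B}_T$ is Observation~\ref{obs-fiota}~(iv).
\item $N(\mathcal{A}_T)=\mathcal{B}_T=N(\mathcal{C}_T)$ is Observation~\ref{obs-fiota}~(vi).
\item The $3$-packing property of $\mathcal{A}_T$ is contained in Observation~\ref{obs-fiota}~(iii).
\item No vertex of $\mathcal{A}_T$ is a leaf, since every leaf lies in $\mathcal{C}_T$ by Observation~\ref{obs-fiota}~(i).
\end{enumerate}
Therefore Theorem~\ref{thm-ABC} yields that $T$ is $(\iota,1)$-critical. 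Note that $T$ is connected, being a tree.

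\emph{Expected obstacle.} The only delicate point is how much to trust Observation~\ref{obs-fiota}, which was stated as following ``directly from the construction.'' If a rigorous check is needed, I would verify items (i), (iv), (vi) and the $3$-packing part of (iii) by induction on the number of operations.
\begin{itemize}
\item $\mathcal{O}_1$ attaches a $C$-leaf to a $B$-vertex. This keeps all adjacencies between $B$ and $A\cup C$, and every $B$-vertex already had an $A$-neighbour and a $C$-neighbour.
\item $\mathcal{O}_2$ attaches a $B$–$C$ pendant path to an $A$-vertex.
\item $\mathcal{O}_3$ attaches $B,A,B,C$ to a $C$-vertex. The new $A$-vertex $x$ is at distance $4$ from the $A$-neighbour of $v$'s $B$-neighbours, so the $3$-packing property is preserved.
\end{itemize}
The fact that $\mathcal{A}_T$ is an isolating set is immediate from (vi): every edge has a $B$-endpoint, which is dominated by $\mathcal{A}_T$. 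Minimality and uniqueness are supplied by Proposition~\ref{prop-unique} and \cite{sentUVED}, so no further work is needed there.
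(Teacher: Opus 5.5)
Your proposal is correct and follows essentially the same route as the paper: the forward direction is Theorem~\ref{thm-i1cons}, and for the converse you use Proposition~\ref{prop-unique} and Observation~\ref{obs-fiota} to identify the unique $\iota(T)$-set with $\mathcal{A}_T$, check that $(\mathcal{A}_T,\mathcal{B}_T,\mathcal{C}_T)$ is a critical tripartition, and conclude by Theorem~\ref{thm-ABC}. Your extra remarks on non-emptiness and your inductive sketch of Observation~\ref{obs-fiota} are sound additions that the paper leaves implicit. For $\mathcal{O}_3$, the precise point is that every old $A$-vertex is at distance at least $2$ from the $C$-vertex $v$, and hence at distance at least $4$ from $x$.
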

\proof{The necessary condition holds by Theorem~\ref{thm-i1cons}. We now prove the sufficient condition.
    Let $T \in \mathcal{F}_\iota$ and let $\mathcal{A}_T$, $\mathcal{B}_T$, and $\mathcal{C}_T$ be the subsets of vertices in $V(T)$ with status $A$, $B$, and $C$, respectively. By Observation~\ref{obs-fiota}, $(\mathcal{A}_T, \mathcal{B}_T, \mathcal{C}_T)$ is a critical tripartition of $T$.    
    By Proposition~\ref{prop-unique}, $T$ has a unique $\iota(T)$-set $D_T$. By Observation~\ref{obs-fiota}~(iii) and~(vi) and the uniqueness of $D_T$, we have that $D_T=\mathcal{A}_T$, $N(D_T)=\mathcal{B}_T$ and $V\setminus N[D_T]=\mathcal{C}_T$. Therefore, by Theorem~\ref{thm-ABC}, $T$ is $(\iota,1)$-critical.\qed{} 
}

\subsection{Algorithmic aspects.} In 2025, Chen et al.~\cite{ChLWX25} gave a linear time algorithm for determining a minimum isolating set of a tree (among other interesting algorithmic results). On the other hand, a set $D$ is a unique isolating set of a tree $T$ if and only if each vertex $v \in D$ has at least two private edges $e,f$ such that $d_T(v,e) = d_T(v,f) = 1$ and $d_T(e,f) = 2$~\cite{sentUVED}. Therefore, once we compute any minimum isolating set of a tree $T$ in linear time, we can verify in linear time whether $T$ has a unique $\iota(T)$-set, which is a necessary condition for $T$ to be $(\iota,1)$-critical. 

Suppose that $D$ is a unique minimum isolating set of $T$. Since, for a given graph $G$ and three non-empty sets $A, B$ and $C$ that partition $V(G)$, we can verify in linear time whether $(A, B, C)$ is a critical tripartition of $G$ (keeping in mind Theorem~\ref{thm-ABC}) we immediately obtain the following corollary. 

\begin{wn}
    Given an $n$-vertex tree $T$, it can be verified in linear time whether $T$ is $(\iota,1)$-critical or not.
\end{wn}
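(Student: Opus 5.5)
The plan is to reduce the linear-time verification to three checks, each linear, using Theorem~\ref{thm-ABC} together with the uniqueness corollary (an $(\iota,1)$-critical tree has exactly one $\iota(T)$-set).

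\emph{Step 1.} Compute some minimum isolating set $D$ of $T$ with the linear-time algorithm of Chen et al.~\cite{ChLWX25}.

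\emph{Step 2.} Test whether $D$ is the unique $\iota(T)$-set, using the private-edge criterion of~\cite{sentUVED}: for each $v\in D$, look for two private edges $e,f$ with $d_T(v,e)=d_T(v,f)=1$ and $d_T(e,f)=2$. To do this in linear time, first mark the private edges. For every edge $ab$, count the vertices of $D$ in $N[a]\cup N[b]$; this costs $O(d(a)+d(b))$ per edge, so $O(\sum_v d(v)^2)$ in total. To keep it linear, I would instead record for each vertex $u$ the number $c(u)=|N[u]\cap D|$. An edge $ab$ is then dominated by exactly one vertex of $D$ precisely when $c(a)+c(b)$ minus the overlap $|N[a]\cap N[b]\cap D|$ equals $1$. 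In a tree this overlap is just $|\{a,b\}\cap D|$, since $a$ and $b$ share no other common neighbours. Once private edges are marked, for each $v\in D$ I scan its neighbours $u$ and their incident edges not containing $v$. The condition $d(e,f)=2$ for edges at distance one from $v$ means that $e$ and $f$ hang from different neighbours of $v$, so it suffices to check that at least two distinct neighbours of $v$ carry a private edge at distance $1$. This costs $O(\sum_{u\in N(v)} d(u))$, and the total is $O(n)$ because $D$ is a $3$-packing in the relevant case. If $D$ is not a $3$-packing, I reject at once by Claim~\ref{claim-3p}. Alternatively, each edge is scanned at most twice when $D$ is a $2$-packing, which I check beforehand in linear time. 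If $D$ is not unique, $T$ is not $(\iota,1)$-critical by the corollary, and we answer no.

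\emph{Step 3.} If $D$ is the unique $\iota(T)$-set, then by Theorem~\ref{thm-ABC} $T$ is $(\iota,1)$-critical if and only if $(D,N(D),V(T)\setminus N[D])$ is a critical tripartition, since ``for each $\iota(T)$-set'' reduces to this single set. Compute $N(D)$ and the remainder $R$ by a linear sweep, then verify the conditions of Definition~\ref{def-ABC} one at a time:
\begin{enumerate}[label=(\roman*), align=left, leftmargin=*]
\item nonemptiness of $D$, $N(D)$ and $R$;
\item independence of $D\cup R$ and of $N(D)$, by scanning every edge and checking that its ends lie on different sides;
\item $N(R)=N(D)$, by checking that every vertex of $N(D)$ has a neighbour in $R$ (the inclusion $N(R)\subseteq N(D)$ follows from the independence checks);
\item no vertex of $D$ is a leaf;
\item $D$ is a $3$-packing, via a multi-source BFS from $D$ truncated at depth $1$: the packing fails exactly when some vertex of $N(D)$ has two $D$-neighbours, or some edge joins two vertices of $N(D)$ belonging to different sources. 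Depth $1$ suffices because distance $\le 3$ between vertices of $D$ arises from a path of length at most $3$.
\end{enumerate}
Each item is $O(n+m)=O(n)$.

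The main obstacle is Step 2, namely guaranteeing a genuinely linear bound for the private-edge test rather than one quadratic in degrees. The counter $c(u)$ together with the tree fact $N(a)\cap N(b)=\emptyset$ for an edge $ab$ is what should make this work.
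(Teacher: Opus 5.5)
Your proposal is correct and follows the same route as the paper. You compute a minimum isolating set $D$ with the algorithm of Chen et al.~\cite{ChLWX25}, and reject unless $D$ is the unique $\iota(T)$-set, which you test with the private-edge criterion of~\cite{sentUVED} and justify by the corollary that $(\iota,1)$-critical trees have exactly one $\iota(T)$-set. Otherwise you apply Theorem~\ref{thm-ABC} to this single $D$, testing whether $(D,N(D),V(T)\setminus N[D])$ is a critical tripartition.

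Your counter $c(u)$ and the packing-based scan just make explicit the linear-time bounds the paper only asserts. One small point: your depth-$1$ test for the $3$-packing property does not detect two adjacent vertices of $D$. It must therefore be run together with the independence check on $D$ from item (ii), including when you use it to reject early in Step~2.
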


By the same arguments, for any $n$-vertex graph $G$, if we can enumerate all $\iota(G)$-sets in time $T(n)$, then we can verify in $O(T(n) \cdot n)$ time whether $G$ is $(\iota,1)$-critical. Recall that in 2008, Fomin et al.~\cite{FGPS08} gave an $O(1.7159^n)$ time algorithm for enumerating all minimal dominating sets of an $n$-vertex graph, thereby showing that the maximum number of minimal dominating sets of such a graph is at most $1.7159^n$. On the other hand, for several classes of graphs, there exist better upper bounds, see for example~\cite{BDMN26,CHHK13,CLL15}. To the best of our knowledge, there is no such general result for enumerating all minimal isolating sets of a graph. 

\section{Open problem}

Observe that the $d$-wounded spider $S_{t,t-d}$ discussed in Section~\ref{sec:general} has $n=2t-d+1=2(t-d)+d+1$ vertices and is $(\iota,q)$-critical for $q=d+1=n-2(t-d)$. In particular, $q \neq n-3$, $q \neq n-5$, etc. Following a similar phenomenon in~\cite[Theorem 2.4]{DHLRT25}, we make the following conjecture.
\begin{conj} 
    For some values of $k$, there are no $(\iota,m-2k)$-critical graphs.
\end{conj} 
\noindent Using a computer, we verified this hypothesis for all $n$-vertex trees with $n \leq 16$.

\section*{Acknowledgements}
Peter Borg was supported by the SEA-EU Research Seed Fund 2025 grant 2025{\textunderscore}UM{\textunderscore}SEA-EU Seed{\textunderscore}01 of the University of Malta. The research work disclosed in this publication is partially funded by the Tertiary Education Scholarships Scheme (Malta).

\end{document}